\documentclass[11pt]{article}
\usepackage[utf8]{inputenc}
\usepackage{amsmath,amssymb,amsthm}
\usepackage{color}
\usepackage[linktocpage,hyperindex,colorlinks,citecolor=blue,linkcolor=blue,urlcolor=blue]{hyperref}

\addtolength{\hoffset}{-0.2cm}\addtolength{\textwidth}{0.8cm}

\newtheorem{theorem}{Theorem}
\newtheorem{lemma}{Lemma}
\newtheorem{proposition}{Proposition}
\theoremstyle{remark}
\newtheorem{remark}{\bf Remark}
\newtheorem{definition}{\bf Definition}

\newcommand{\R}{\mathbb R}
\newcommand{\N}{\mathbb N}

\newcommand{\ve}{\varepsilon}
\newcommand{\ds}{\displaystyle}
\newcommand{\al}{\alpha}
\newcommand{\la}{\lambda}
\newcommand{\lf}{\left}
\newcommand{\rg}{\right}
\newcommand{\lap}{\Delta}

\providecommand{\abs}[1]{\lvert#1\rvert}
\providecommand{\norm}[1]{\lVert#1\rVert}

\title{Nonradial solutions for the H\'enon equation close to the threshold}

\author{Pablo Figueroa \quad and \quad S\'ergio L. N. Neves}

\begin{document}

\maketitle
\begin{abstract}
\noindent We consider the H\'enon problem
\begin{equation*}
\begin{cases}
-\Delta u = \abs{x}^{\alpha} u^{\frac{N+2+2\alpha}{N-2}-\varepsilon} & \ \ \text{in} \ B_1, \\
u > 0 & \ \ \text{in} \ B_1, \\
u=0 & \ \ \text{on} \ \partial B_1, 
\end{cases}
\end{equation*}
where $B_1$ is the unit ball in $\R^N$ and $N\geqslant 3$. For $\varepsilon > 0$ small enough, we use $\alpha$ as a paramenter and prove the existence of a branch of nonradial solutions that bifurcates from the radial one when $\alpha$ is close to an even positive integer.

\end{abstract}

\emph{Keywords}: Bifurcation, H\'enon problem, Nonradial solutions
\\[0.1cm]

\emph{2010 AMS Subject Classification}: 35B32, 35J25, 35J60

\section{Introduction}

In this paper we consider the  H\'enon problem
\begin{equation}\label{eq1p}
\begin{cases}
-\Delta u = \abs{x}^{\alpha} u^p & \ \ \text{in} \ B_1, \\
u > 0 & \ \ \text{in} \ B_1, \\
u=0 & \ \ \text{on} \ \partial B_1. 
\end{cases}
\end{equation}
on the the unit ball $B_1 \subset \R^N$ with $N\geqslant 3$, $\al>0$ and $1<p<p_{\alpha} := \frac{N+2+2\alpha}{N-2}$.

The equation \eqref{eq1p} was introduced in \cite{H} by H\'enon in the study of stellar cluster in spherically symmetric setting and it is known as Hénon equation. We mention here some references but, since there is a vast literature regarding this problem and related ones, we remind that the list is far from complete. One of the earliest papers in this subject is \cite{Ni} by Ni,  where he proved the existence of a radial solution to \eqref{eq1p} for every $\alpha >0$ and $1<p<p_\al$ by using variational methods in the space of radial functions. Moreover, if $p\geqslant p_\al$, it follows from a Pohozaev type argument that \eqref{eq1p} admits no solutions, see for example \cite{Na}. Hence, the exponent $p_\al$ is the \emph{threshold} between existence and nonexistence. We avoid the use of the term \emph{critical} since this term is used to refer to the exponent $p_0 = \frac{N+2}{N-2}$ because of the critical Sobolev exponent.

The study of problem \eqref{eq1p} reveals some interesting phenomena. In particular, several questions arising naturally such as existence, multiplicity and qualitative properties of solutions have given the H\'enon equation an interesting role in nonlinear analysis and critical point theory. For instance, the existence result of Ni provides solutions for values $p$ above the critical Sobolev exponent. Another feature of this problem is that nonradial solutions might appear since the weight $|x|^{\alpha}$ is increasing and the symmetry result of Gidas, Ni and Nirenberg \cite{GNN} does not apply. Indeed, in \cite{SSW} Smets, Su and Willem proved the existence of nonradial solutions of \eqref{eq1p} in the subcritical case $1 < p < \frac{N+2}{N-2}$ by showing that the ground state solutions are not radial for $\alpha$ sufficiently large, roughly speaking they studied the asymptotic properties of the ground state levels in the spaces $H_{0,\text{rad} }^1(B_1)$ and $H_{0}^1(B_1)$. Working in some different subspaces of $H_{0}^1(B_1)$, Serra \cite{S} proved the existence of nonradial solutions for $p=\frac{N+2}{N-2}$, again for $\al$ sufficiently large. Similar ideas were used by Badiale and Serra \cite{BS} to obtain multiplicity results for some supercritical values of $p$ and for $\alpha$ large.

Another approach to prove the existence of nonradial solutions is to use perturbation methods and the well known Lyapunov-Schimdt finite dimensional reduction. In this setting, we refer to the papers of Peng \cite{P} and Pistoia and Serra \cite{PS}, where they used the exponent $p$ as a parameter and constructed solutions which blow up on points of the boundary as $p\to\frac{N+2}{N-2}$ from the left. In \cite{LiuP} the authors considered the case when $p\to\frac{N+2}{N-2}$ from the right. We cite also \cite{HCZ,WY} where the authors proved the existence of infinitely many solutions in the critical case $p=\frac{N+2}{N-2}$, in this case the parameter is the number of peaks of the solutions. Additionally, we cite the paper \cite{GG12} where the authors prove the existence of solutions to the Hénon equation in more general bounded domains, with $p$ close to $p_{\alpha}$ and using the Lyapunov-Schimdt reduction method.

It is also possible to find nonradial solutions via bifurcation methods. Amadori and Gladiali \cite{AG} proved the existence of at least one unbounded branch in the Hölder space $C^{1,\gamma}(\bar B_1)$ of nonradial solutions to \eqref{eq1p} that bifurcate from the radial one for some $\bar p\in (1,p_\al)$ with $\al\in(0,1]$ fixed. The used method does not allow to know if $\bar p$ is either subcritical or supercritical. 

So far the existence results of nonradial solutions of \eqref{eq1p} apply mostly for the case of $p$ subcritical and for some values of supercritical $p$. However, to our best knowledge, there are no results concerning the existence of nonradial solutions for supercritical values of $p$ close to $p_\al$. In this paper, we want to study the case of $p$ close to the threshold $p_{\alpha}$. More specifically, our aim is to prove the existence of nonradial solutions for the Hénon equation in the following case
\begin{equation} \label{eq1}
\begin{cases}
-\Delta u = \abs{x}^{\alpha} u^{p_\al-\varepsilon} & \ \ \text{in} \ B_1, \\
u > 0 & \ \ \text{in} \ B_1, \\
u=0 & \ \ \text{on} \ \partial B_1, 
\end{cases}
\end{equation}
for $\varepsilon > 0$ small and $\alpha>0$. In order to state our results, let us introduce some elements. Let $u_{\varepsilon,\alpha}$ be the unique radial solution to \eqref{eq1}, see Proposition \ref{prop1} for a proof. 

\begin{definition}
We say that a nonradial bifurcation occurs at $(\bar\al,u_{\ve,\bar\al})$ if in every neighborhood of $(\bar\al,u_{\ve,\bar\al})$  in $(0,+\infty)\times C^{1,\gamma}_0(B_1)$ there exists a point $(\al,v_\al)$ with $v_\al$ a nonradial solution to \eqref{eq1}.
\end{definition}  

Our main result is the following.

\begin{theorem} \label{t1}
Let $\alpha_k$ be an even positive integer. Then, given $\rho > 0$, there exists $\varepsilon_0 > 0$ such that for every $\varepsilon \in (0,\varepsilon_0)$ there exists $\alpha_k^{\varepsilon} \in (\alpha_k-\rho,\alpha_k+\rho)$ and a continuum of nonradial solutions of \eqref{eq1} bifurcating from the pair $\big(\alpha_k^{\varepsilon},u_{\varepsilon,\alpha_k^{\varepsilon}} \big)$. Moreover, it holds $\al_k^\ve \to \alpha_k$ as $\ve \to 0$.
\end{theorem}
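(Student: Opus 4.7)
The plan is to prove Theorem \ref{t1} by a Krasnoselskii--Rabinowitz bifurcation argument applied to the branch $\alpha\mapsto u_{\varepsilon,\alpha}$ of radial solutions of \eqref{eq1}, treating $\alpha$ as the bifurcation parameter with $\varepsilon>0$ small but fixed. To avoid the high multiplicity of eigenvalues produced by the full rotation group $O(N)$, I would work inside the subspace $X\subset C^{1,\gamma}_0(B_1)$ of functions invariant under $O(N-1)$ acting on the last $N-1$ coordinates. In $X$ only the axisymmetric spherical harmonics (Gegenbauer polynomials of $\cos\theta_1$) appear, so that each harmonic mode is one-dimensional and crossings of eigenvalues can be made simple.

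\noindent\textbf{Linearized problem.} Define $F\colon(0,+\infty)\times X\to X$ by
\begin{equation*}
F(\alpha,u)=u-(-\Delta)^{-1}\!\bigl(|x|^{\alpha}u_+^{p_\alpha-\varepsilon}\bigr),
\end{equation*}
whose positive zeros are solutions of \eqref{eq1}. By Proposition \ref{prop1} the trivial branch $F(\alpha,u_{\varepsilon,\alpha})=0$ is well-defined, and one shows by the implicit function theorem that $\alpha\mapsto u_{\varepsilon,\alpha}$ is of class $C^{1}$. The Fréchet derivative along this branch is $I-T_{\varepsilon,\alpha}$ with
\begin{equation*}
T_{\varepsilon,\alpha}v=(p_\alpha-\varepsilon)(-\Delta)^{-1}\!\bigl(|x|^{\alpha}u_{\varepsilon,\alpha}^{\,p_\alpha-\varepsilon-1}v\bigr),
\end{equation*}
which is compact and commutes with rotations. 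Expanding $v\in X$ as $v=\sum_{k\ge0}v_k(r)Y_k(\theta)$ where $Y_k$ is the axisymmetric harmonic of degree $k$, the operator $T_{\varepsilon,\alpha}$ splits into 1-D compact operators $T^{(k)}_{\varepsilon,\alpha}$ acting on the radial profiles. A nonradial bifurcation from $u_{\varepsilon,\bar\alpha}$ is obtained whenever $1$ is a simple eigenvalue of $T^{(k)}_{\varepsilon,\bar\alpha}$ for some $k\ge1$ and the corresponding eigenvalue curve $\alpha\mapsto \mu_k(\varepsilon,\alpha)$ crosses $1$ transversally.

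\noindent\textbf{Locating the crossing via the limit problem.} I would next analyze the spectrum as $\varepsilon\to0$. Rescaling $u_{\varepsilon,\alpha}$ around its maximum turns it, in the limit, into the entire Hénon bubble $U_\alpha$ solving $-\Delta U_\alpha=|x|^{\alpha}U_\alpha^{p_\alpha}$ on $\R^N$; the conjugated operators $T^{(k)}_{\varepsilon,\alpha}$ converge to the analogous compact operators associated to $U_\alpha$. Using the Emden--Fowler change of variables $t=\ln r$, the mode-$k$ radial ODE becomes an autonomous Schrödinger operator on $\R$, and a direct computation shows that its first eigenvalue equals $1$ precisely when $\alpha$ is an even positive integer $\alpha_k$, the $k$-th mode eigenfunction being the explicit generator of the emerging kernel. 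By continuity of eigenvalues in the compact family $T^{(k)}_{\varepsilon,\alpha}$, for each small $\varepsilon$ there is $\alpha_k^{\varepsilon}\in(\alpha_k-\rho,\alpha_k+\rho)$ at which $\mu_k(\varepsilon,\alpha_k^\varepsilon)=1$, with $\alpha_k^{\varepsilon}\to\alpha_k$ as $\varepsilon\to0$.

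\noindent\textbf{Transversality and conclusion.} To apply Rabinowitz's global theorem one must check $\partial_\alpha\mu_k(\varepsilon,\alpha)|_{\alpha=\alpha_k^{\varepsilon}}\neq 0$. Differentiating the eigenvalue equation in $\alpha$ and pairing against the eigenfunction $\phi_k$ gives a formula whose leading order in $\varepsilon$ is controlled by an explicit integral against $U_{\alpha_k}$, and a direct computation on the Hénon bubble shows that this integral is nonzero. With transversality established, $1$ is a simple eigenvalue of $T_{\varepsilon,\alpha_k^{\varepsilon}}$ in $X$ of odd algebraic multiplicity, so Rabinowitz's theorem produces a continuum of nonradial solutions of \eqref{eq1} bifurcating from $(\alpha_k^{\varepsilon},u_{\varepsilon,\alpha_k^{\varepsilon}})$. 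The main obstacle is the last spectral step: isolating the even positive integers as precisely those values of $\alpha$ where the mode-$k$ kernel of the limit problem opens, and producing the quantitative expansion of $\mu_k(\varepsilon,\alpha)$ needed to guarantee transversality uniformly for $\varepsilon$ small.
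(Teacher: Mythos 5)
Your strategy mirrors the paper's architecture in its essentials: rescale so that $u_{\varepsilon,\alpha}$ converges to the entire-space H\'enon bubble $U_\alpha$, decompose the linearization along spherical harmonics, restrict to the $O(N-1)$-invariant subspace so that each harmonic mode contributes a one-dimensional eigenspace (via Smoller--Wasserman), and use the limit problem to locate the degeneracies at even integers. The paper makes this precise via the singular Sturm--Liouville eigenvalue problem \eqref{eqz} with eigenvalues $\Lambda_j^{\varepsilon}(\alpha)$; the crossing condition $\Lambda_1^{\varepsilon}(\alpha)=-\sigma_k$ is the analogue of your $\mu_k(\varepsilon,\alpha)=1$. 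Two ingredients you gloss over but would need to supply are (a) the uniform decay and $L^\infty$ a priori bound on the rescaled solutions (Lemmas \ref{lem2}, \ref{lem3}), which are what justify passing to the limit in the spectral problem, and (b) the analogue of Lemma \ref{lem7}, ruling out additional crossings coming from $\Lambda_2^{\varepsilon}(\alpha)$ or higher; without it one cannot assert that $1$ is a simple characteristic value of the mode-$k$ linearization.

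The genuine gap is the transversality step, which you correctly identify as the main obstacle but which is in fact avoidable. The convergence $\Lambda_1^{\varepsilon}\to\Lambda_1$ proved in Lemma \ref{lem6} is only $C^0$-uniform, so the strict monotonicity $\partial_\alpha\Lambda_1(\alpha)=-\tfrac{\alpha}{2}-\tfrac{N}{2}<0$ of the limit curve does not automatically transfer to a sign of $\partial_\alpha\Lambda_1^{\varepsilon}$ at $\alpha_k^{\varepsilon}$, and you would need to upgrade the spectral convergence to $C^1$ in $\alpha$ or derive a quantitative expansion of $\Lambda_1^{\varepsilon}$ to close this. The paper sidesteps this entirely by a topological argument: real-analyticity of $\alpha\mapsto\Lambda_1^{\varepsilon}(\alpha)$ (Remark \ref{rem1}) makes the zeros of $\Lambda_1^{\varepsilon}(\alpha)+\sigma_k$ isolated, and the endpoint inequality $\Lambda_1^{\varepsilon}(\alpha_k-\rho)>-\sigma_k>\Lambda_1^{\varepsilon}(\alpha_k+\rho)$ then forces a sign-changing (odd-order) zero $\alpha_k^{\varepsilon}$, across which the Morse index $m(\alpha)$ in $\mathcal{H}_{\varepsilon}$ jumps by exactly one. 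Combined with the Leray--Schauder degree formula $\mathit{deg}\left(F_{\varepsilon}(\alpha,\cdot),\Gamma_\alpha,0\right)=(-1)^{m(\alpha)}$ and homotopy invariance, this yields bifurcation with no derivative information on the eigenvalue curve. Note also that $\alpha$ enters the operator nonlinearly, so the literal Krasnoselskii--Rabinowitz statement (with the parameter multiplying the operator) does not apply; what works, and what the paper implements, is the degree-change argument directly. Finally, one must still verify that the bifurcating branch consists of positive solutions (the paper's Step 3), which your $u_+$ formulation sets up but does not carry out.
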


The result is inspired by \cite{GGN}, where the authors considered the H\'enon equation in the whole space with $p=p_{\alpha}$, namely,

\begin{equation}\label{problemrn}
\begin{cases}
-\Delta u =C_{N,\al} |x|^{\alpha}u^{p_\al }, \ \ & \text{in} \ \R^N \\
u > 0, & \text{in} \ \R^N,
\end{cases}
\end{equation}
where $N\ge3$ and $C_{N,\al}:=(N-2)(N+\al)$, which has the explicit family of radial solutions
\begin{equation}\label{radsolrn}
 U_{\lambda,\alpha}(x) = \frac{\lambda^{\frac{N-2}{2}}}{(1+\lambda^{2+\alpha}|x|^{2+\alpha})^{\frac{N-2}{2+\alpha}}},\ \ \text{with }\ \la>0.
 \end{equation}
They have characterized the solutions of the linearized equation of \eqref{problemrn} around the radial solution $U_{1,\alpha}$, provided a formula for the Morse index of these solutions and deduced that the Morse index change as the parameter $\alpha$ crosses the even integers. After that it was proved the existence of nonradial solutions of \eqref{problemrn} bifurcating from $(\alpha,U_{1, \alpha})$ for $\alpha$ even, by studying first an approximate problem on a ball with radius $\frac{1}{\ve}$ and passing to the limit as $\ve\to 0$ with a careful analysis of several estimates. We want to apply some of these ideas to our problem since, after a proper rescaling, \eqref{eq1} can be considered as an approximating problem for \eqref{problemrn} in an expanding ball as $\varepsilon \to 0$. 

Finally, we point out some comments about the proof of the theorem. We consider the curve of radial solutions of \eqref{eq1} and  $\alpha$ as a parameter, if $\varepsilon$ is small enough there is a change in the Morse index of the radial solution of \eqref{eq1} for $\alpha$ close to an even integer and so we can apply the classical bifurcation theory to deduce the existence of a branch of nonradial solutions to the rescaled problem. We deduce Theorem \ref{t1} by using the bifurcation result for the rescaled problem, which in some sense resembles the approximate problem in \cite{GGN}. We also make use of the nondegeneracy of $u_{\ve,\al}$ in the space of radial functions and the uniform convergence of the eigenvalues and eigenfunctions. An important ingredient is that the only eigenvalue that plays a role is the first one, see section \ref{linprob} for more details.
\begin{remark}
The nonradial solutions given by Theorem \ref{t1} belong to the subspace of functions invariant with respect to the subgroup $O(N-1)$ in $C^{1,\gamma}_0(B_1)$, where $O(h)$ is the orthogonal group in $\R^h$. With the same technique one can also prove a bifurcation result in different subspaces as long as there is an odd change in the Morse index of the radial solution $u_{\varepsilon,\al}$. For example, if $\alpha_k = 2(k-1)$ with $k$ even then it is possible to find $[\frac{N}{2}]$ different continua of positive nonradial solutions of \eqref{eq1} bifurcating from $(\alpha_k^{\varepsilon},u_{\varepsilon,\alpha_k^ {\varepsilon}})$ as in \cite[Theorem 3.8]{GGN}. Precisely, every different continua belongs to $O(h)\times O(N-h)$ for $1\leqslant h\leqslant [\frac{N}{2}]$.
\end{remark}

\begin{remark}
There is also the question about what happens with the continua of nonradial solutions of \eqref{eq1} in the limit case $\varepsilon \to 0$. After a suitable rescaling, they might converge to a branch of nonradial solutions of the limit problem \eqref{problemrn}. This was the subject of \cite{GGN} where the authors proved the existence of nonradial solutions of \eqref{problemrn}, but in \cite{GGN} it was used a different approximating problem. 
\end{remark}

The paper is organized as follows. In section \ref{sec1}, some properties of the radial solutions $u_{\ve,\al}$ are shown in order to use them in next sections. In section \ref{resprob} we present the rescaled problem and prove the convergence to \eqref{problemrn}. The linearized equation of the rescaled problem is studied in section \ref{linprob} in order to state the convergence of eigenvalues and eigenfunctions, we also deduce the existence of \linebreak $\al_k^\ve\sim 2(k-1)$ where the Morse index changes. Finally the last section \ref{proofresult} is devoted to the proof of Theorem \ref{t1}.

\section{Properties of the radial solutions}\label{sec1}

In this section, we present some important facts about the radial solutions $u_{\ve,\al}$ to \eqref{eq1} for $\ve,\al>0$ such as the existence, uniqueness, asymptotic behavior of its $L^\infty$-norm $\|u_{\ve,\al}\|_{L^\infty(B_1)}=u_{\ve,\al}(0)$, an important estimate to pass to the limit $\ve\to 0$ and the nondegeneracy in the space of radial functions. It is worth to mention that some of these properties are uniform for $\al$ in compact subsets of $(0,+\infty)$. 

\begin{proposition}\label{prop1}
Suppose that $1 < p < \frac{N+2+2\alpha}{N-2}$, then the equation \eqref{eq1p}
admits a unique radial solution.
\end{proposition}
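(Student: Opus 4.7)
The plan is to prove existence by variational methods in the radial subspace and uniqueness by reducing the problem to a Lane--Emden type ODE.

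For existence, I would consider the energy functional
\[
J(u) = \frac{1}{2}\int_{B_1} |\nabla u|^2\, dx - \frac{1}{p+1}\int_{B_1} |x|^{\alpha} (u^+)^{p+1}\, dx
\]
restricted to $H^1_{0,\text{rad}}(B_1)$. The crucial ingredient is Ni's compactness result: for $\alpha > 0$, the embedding $H^1_{0,\text{rad}}(B_1) \hookrightarrow L^{q}(B_1; |x|^{\alpha}\, dx)$ is compact whenever $q < \frac{2(N+\alpha)}{N-2} = p_{\alpha}+1$, even in the range where $q$ exceeds the standard Sobolev exponent. Consequently $J$ has mountain-pass geometry and satisfies the Palais--Smale condition on the radial subspace, so the mountain-pass theorem yields a nontrivial radial critical point $u$. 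Testing the equation with $u^-$ gives $u\geqslant 0$, and the strong maximum principle forces $u > 0$ in $B_1$; the principle of symmetric criticality of Palais ensures that $u$ is also a critical point of $J$ on $H^1_0(B_1)$, hence a weak solution of \eqref{eq1p}.

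For uniqueness, I would pass to the radial ODE
\[
-u''(r) - \frac{N-1}{r}u'(r) = r^{\alpha} u(r)^p, \quad u'(0)=0,\ u(1)=0,
\]
and apply the Emden--Fowler substitution $s = r^{(2+\alpha)/2}$, $v(s) = u(r)$. A direct computation shows that, after an explicit rescaling of $v$, this function solves the classical Lane--Emden equation
\[
-v''(s) - \frac{M-1}{s}v'(s) = v(s)^p, \quad s\in(0,1),\ v'(0)=0,\ v(1)=0,
\]
in the effective dimension $M := \frac{2(N+\alpha)}{2+\alpha} > 2$. A short algebraic manipulation confirms that the condition $p < p_{\alpha}$ is equivalent to the standard subcritical condition $p < \frac{M+2}{M-2}$. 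Uniqueness of the positive solution then follows from the classical Ni--Nussbaum (or Kwong) uniqueness theorem for Lane--Emden on a ball.

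The main obstacle is that $M$ is in general not an integer, so one cannot formally invoke a uniqueness statement phrased in terms of $\R^M$. However, the Ni--Nussbaum and Kwong arguments are ODE based, relying on Pohozaev-type identities and sharp monotonicity properties of an auxiliary Emden--Fowler function: none of these manipulations requires $M$ to be an integer, and they extend verbatim to any real $M > 2$. Verifying this routine extension closes the proof.
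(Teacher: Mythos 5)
Your argument is correct and, for the uniqueness half, follows precisely the alternative route the paper itself mentions in the sentence after its proof (``make a change of variables and reduce to the classical case $\alpha=0$,'' citing \cite[Theorem A.2]{GGN}). The paper's own proof does not change variables at all: it observes that the ODE $u''+\tfrac{N-1}{r}u'+r^\alpha u^p=0$ is invariant under $u\mapsto\lambda^{\frac{2+\alpha}{p-1}}u(\lambda\,\cdot\,)$, invokes uniqueness of the associated initial value problem from \cite[Prop.~2.35]{NN}, and then uses $u(1)=v(1)=0$ to force $\lambda=1$. Your Emden--Fowler reduction to effective dimension $M=\tfrac{2(N+\alpha)}{2+\alpha}>2$ is computed correctly and the exponent identity $p<p_\alpha\Leftrightarrow p<\tfrac{M+2}{M-2}$ holds; the one soft spot is the appeal to ``Ni--Nussbaum or Kwong'' for the reduced problem. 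Kwong's theorem concerns the ground state of $-\Delta u+u=u^p$ in $\R^N$ and is not the right reference for the Dirichlet ball problem; Ni--Nussbaum is, but the cleanest way to close your argument --- and to dispel the non-integer-$M$ worry entirely --- is to note that the reduced equation $v''+\tfrac{M-1}{s}v'+v^p=0$ enjoys the same scaling invariance $v\mapsto\mu^{2/(p-1)}v(\mu\,\cdot\,)$, so that the scaling-plus-IVP-uniqueness argument applies verbatim for any real $M>2$. In that sense your change of variables adds a computation but relies on the same core ingredient as the paper; what it buys is conceptual clarity (the problem really is a subcritical Lane--Emden problem in a fractional dimension). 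The existence half (mountain pass plus Ni's compact radial embedding) is standard and the paper simply takes it as known from \cite{Ni}.
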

\begin{proof} This result is well known but we write the proof here for the sake of completeness.
Consider the o.d.e.
$$ u'' + \frac{N-1}{r}u' + r^{\alpha}u^p = 0.$$
If $u$ is a solution to the above equation then $u_{\lambda}(x) = \lambda^{\frac{2+ \alpha}{p-1}}u(\lambda x)$ is also a solution. It is known that the i.v.p.
\begin{equation*}
\begin{cases}
u'' + \dfrac{N-1}{r}u' + r^{\alpha}u^p = 0 \\
u(0) = a > 0 \qquad u'(0)=0
\end{cases}
\end{equation*}
admits a unique solution, see for instance \cite[Prop. 2.35]{NN}. If $u$ and $v$ are two solutions of \eqref{eq1p} then, taking $\lambda^{\frac{2+\alpha}{p-1}} = u(0)/v(0)$, by the uniqueness it follows that $u(r) = \lambda^{\frac{2+\alpha}{p-1}}v(\lambda r)$. Since $u(1)=0=v(1)$, then we must have $\lambda =1$. Therefore, $u=v$ in $B_1$.
\end{proof}

\noindent Another simple way to prove the above uniqueness result is to make a change of variables and reduce to the classical case $\alpha=0$, see \cite[Theorem A.2]{GGN}.

Now, we study the asymptotic behavior of $u_{\ve,\al}$ in $L^\infty(B_1)$. Notice that, since $u_{\ve,\al}'(r)<0$ for $r>0$, it follows that $\|u_{\ve,\al}\|_{L^\infty(B_1)}=u_{\ve,\al}(0)$. The first part of the next result was essentially proved in \cite{LP}. Here, we give a simpler proof and extend the convergence uniformly for $\al$ in compact subsets of $[0,+\infty)$.

\begin{lemma}\label{lem1}
It holds 
\begin{equation} \label{eq7} 
\lim_{\varepsilon \to 0} \varepsilon u_{\varepsilon,\alpha}^2(0) = \frac{2(2+\alpha)}{N-2}[(N-2)(N+\alpha)]^{\frac{N-2}{2+\alpha}} \dfrac{\Gamma\left( \frac{2(N+\alpha)}{2+\alpha} \right)}{\left[\Gamma\left(\frac{N+\alpha}{2+\alpha}\right)\right]^2}:=M(N,\al)
\end{equation}
uniformly for $\alpha$ in compact subsets of $[0,\infty)$.
In particular, defining \linebreak $\mu_{\varepsilon,\alpha} = \left( u_{\varepsilon,\alpha}(0)\right)^{-2}$ it follows that
\begin{equation*}
\mu_{\varepsilon,\alpha}^{\varepsilon} \to 1 \quad \text{as} \quad \varepsilon \to 0
\end{equation*} 
uniformly for $\alpha$ in compact subsets of $[0,\infty)$.
\end{lemma}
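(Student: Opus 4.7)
The plan is to prove \eqref{eq7} by a blow-up rescaling centered at the origin together with Pohozaev's identity applied on $B_1$. Set $M_\ve := u_{\ve,\al}(0)$ and $\gamma_\ve := (p_\al-\ve-1)/(2+\al)$. First I would argue $M_\ve\to\infty$ as $\ve\to 0$: otherwise, up to a subsequence, $u_{\ve,\al}$ would converge in $C^2(\bar B_1)$ to a positive solution of the threshold H\'enon equation $-\Delta u = |x|^\al u^{p_\al}$ with $u|_{\partial B_1}=0$, which is forbidden by the Pohozaev identity. Then I rescale $v_\ve(y) := M_\ve^{-1}u_{\ve,\al}(M_\ve^{-\gamma_\ve}y)$, which solves $-\Delta v_\ve = |y|^\al v_\ve^{p_\al - \ve}$ on $B_{R_\ve}$ with $R_\ve := M_\ve^{\gamma_\ve}\to\infty$, $v_\ve(0)=1$ and $v_\ve|_{\partial B_{R_\ve}}=0$. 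Since $v_\ve$ is radial and strictly decreasing, $0<v_\ve\le 1$, so standard elliptic regularity plus uniqueness of the positive radial solution to $-\Delta V = |y|^\al V^{p_\al}$ on $\R^N$ with $V(0)=1$ yield $v_\ve\to V$ in $C^1_{\mathrm{loc}}(\R^N)$, where
\[ V(y) = \bigl(1+C_{N,\al}^{-1}|y|^{2+\al}\bigr)^{-(N-2)/(2+\al)} \]
is a rescaling of the profile \eqref{radsolrn} adapted to the equation without the factor $C_{N,\al}$.

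The next step is Pohozaev's identity for $u_{\ve,\al}$ on $B_1$: multiplying the equation by $x\cdot\nabla u_{\ve,\al}$, integrating by parts, using $u_{\ve,\al}=0$ on $\partial B_1$, and combining with the energy identity $\int |\nabla u_{\ve,\al}|^2 = \int |x|^\al u_{\ve,\al}^{p_\al-\ve+1}$ yields
\[ \omega_{N-1}\,|u_{\ve,\al}'(1)|^2 = \frac{(N-2)\ve}{p_\al-\ve+1}\int_{B_1}|x|^\al u_{\ve,\al}^{p_\al-\ve+1}\,dx. \]
Using the radial-ODE formula $R_\ve^{N-1}v_\ve'(R_\ve) = -\int_0^{R_\ve} s^{N-1+\al}v_\ve^{p_\al-\ve}ds$ to rewrite $u_{\ve,\al}'(1)$ and tracking the powers of $M_\ve$ produced by the rescaling, this becomes
\[ \ve M_\ve^2 = M_\ve^{\ve(N-2)/(2+\al)}\cdot\frac{p_\al-\ve+1}{N-2}\cdot\frac{\bigl(\int_0^{R_\ve} s^{N-1+\al}v_\ve^{p_\al-\ve}ds\bigr)^{2}}{\int_0^{R_\ve} s^{N-1+\al}v_\ve^{p_\al-\ve+1}ds}. \]
Passing to the limit, the numerator integral tends to $K := \int_0^\infty s^{N-1+\al}V^{p_\al}ds = (N-2)C_{N,\al}^{(N-2)/(2+\al)}$, computed by integrating the radial equation for $V$ from $0$ to $\infty$ and reading off $r^{N-1}V'(r)\to -K$ from the explicit decay of $V$. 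The denominator tends to $\int_0^\infty s^{N-1+\al}V^{p_\al+1}ds$, which via the substitution $w=C_{N,\al}^{-1}s^{2+\al}$ reduces to a Beta function $B(s_0,s_0)$ with $s_0=(N+\al)/(2+\al)$, producing the gamma ratio $\Gamma(s_0)^2/\Gamma(2s_0)$. An algebraic simplification using $p_\al+1 = 2(N+\al)/(N-2)$ and $C_{N,\al} = (N-2)(N+\al)$ then matches the resulting expression exactly to $M(N,\al)$.

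The main technical obstacle is justifying the passage to the limit on the expanding ball $B_{R_\ve}$: I would split each integral into a compact part, where $C^1_{\mathrm{loc}}$ convergence of $v_\ve$ suffices, and a tail, controlled uniformly in $\ve$ using a pointwise bound $v_\ve(y)\lesssim (1+|y|)^{-(N-2)}$ on $B_{R_\ve}$ obtained via comparison with $V$ and the maximum principle (alternatively via a Kelvin transform and boundary regularity). A small bootstrap then gives $M_\ve^{\ve(N-2)/(2+\al)}\to 1$: the a priori bound $\ve M_\ve^2 = O(1)$ extracted from the displayed identity forces $\ve\log M_\ve\to 0$, so $M_\ve^{c\ve}\to 1$ for any constant $c$. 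The same observation gives the corollary $\mu_{\ve,\al}^\ve = M_\ve^{-2\ve}\to 1$ at once. Uniformity in $\al$ on compact subsets of $[0,\infty)$ follows from the continuous dependence of $V$, $K$, the Beta integral, and the elliptic/ODE estimates on $\al$; these are all smooth in $\al$, so the compactness and tail estimates can be taken uniform in $\al$ on the compact set.
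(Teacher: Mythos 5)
Your argument is correct and arrives at the stated value of $M(N,\al)$, but it follows a genuinely different route from the paper. The paper's proof is a reduction: the change of variable $v_{\ve,\al}(r) = [2/(2+\al)]^{2/(p_\al-1-\ve)}u_{\ve,\al}(r^{2/(2+\al)})$ turns the H\'enon ODE into the weightless nearly-critical ODE $v'' + \frac{m-1}{r}v' + v^{(m+2)/(m-2)-\ve} = 0$ on $(0,1)$ with fractional parameter $m = 2(N+\al)/(2+\al)>2$, after which the Atkinson--Peletier asymptotics \cite[Theorem A]{AP2} are applied (with the observation that their analysis extends to non-integer $m$ and gives uniformity in $\al$). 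Your proof instead re-derives the asymptotics from scratch for the H\'enon equation itself: Pohozaev rules out bounded subsequences of $u_{\ve,\al}(0)$; the rescaling $v_\ve(y)=M_\ve^{-1}u_{\ve,\al}(M_\ve^{-\gamma_\ve}y)$ with $\gamma_\ve=(p_\al-\ve-1)/(2+\al)$ produces the right normalization and expanding ball; the Pohozaev identity on $B_1$ combined with the radial first-order reduction expresses $\ve M_\ve^2$ through rescaled integrals; and the limit integrals evaluate to $(N-2)C_{N,\al}^{(N-2)/(2+\al)}$ (from the decay of $r^{N-1}V'$) and to $\frac{C_{N,\al}^{(N+\al)/(2+\al)}}{2+\al}\frac{\Gamma\bigl(\frac{N+\al}{2+\al}\bigr)^2}{\Gamma\bigl(\frac{2(N+\al)}{2+\al}\bigr)}$ (Beta substitution). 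I checked the power-tracking in $M_\ve$ and the final algebra: the exponent $2\frac{N-2}{2+\al}-\frac{N+\al}{2+\al}=\frac{N-2}{2+\al}-1$ yields an extra $C_{N,\al}^{-1}$ that, together with $(p_\al+1)(N-2)(2+\al)=2(N+\al)(2+\al)$, gives exactly $M(N,\al)$. What your approach buys is self-containment (no need to argue that a classical result extends to non-integer $m$) and an argument in the same spirit as the decay and convergence lemmas used later in the paper (Lemmas \ref{lem2}, \ref{lem3}, \ref{lem6}); the cost is that it is longer, requiring the uniform tail bound on $v_\ve$ and the $\ve\log M_\ve\to 0$ bootstrap, both of which you flag. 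A minor organizational remark: the a priori bound $\ve M_\ve^2=O(1)$ (hence $M_\ve^{c\ve}\to 1$) should be stated and proved from the displayed identity \emph{before} passing to the limit in it, since the factor $M_\ve^{\ve(N-2)/(2+\al)}$ appears there and the same fact is also needed to deduce $\mu_{\ve,\al}^\ve\to 1$; as you note, this follows by taking logarithms in the identity and using the uniform boundedness of the rescaled integrals.
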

\begin{proof}
Consider the function defined by 
$$v_{\varepsilon,\alpha}(r) = \left[ \frac{2}{2+\alpha} \right]^{\frac{2}{p_{\alpha}-1-\varepsilon}} u_{\varepsilon,\alpha}(r^{\frac{2}{2+\alpha}}),$$
then $v_{\varepsilon,\alpha}$ satisfies the equation
\begin{equation} \label{eq5}
\begin{cases}
v'' + \dfrac{m-1}{r}v' + v^{\frac{m+2}{m-2} -\varepsilon} = 0, \qquad r \in (0,1) \\
v'(0) = 0 =v(1)
\end{cases}
\end{equation}
where $m=\frac{2(N+\alpha)}{2+\alpha}$.

Now, we apply the result in \cite[Theorem A]{AP2} and get
\begin{align}
\lim_{\varepsilon \to 0} \varepsilon v_{\varepsilon,\alpha}^2(0) &= \frac{4}{m-2}[m(m-2)]^{\frac{m-2}{2}}\frac{\Gamma(m)}{\left[ \Gamma\left( \frac{m}{2}\right)\right]^2} \notag \\
\lim_{\varepsilon \to 0} \left[ \left(\frac{2}{2+\alpha}\right)^{\frac{4}{p_{\alpha}-1-\varepsilon}} \varepsilon \, u_{\varepsilon,\alpha}^2(0)\right] &= \frac{4}{m-2}[m(m-2)]^{\frac{m-2}{2}}\frac{\Gamma(m)}{\left[ \Gamma\left( \frac{m}{2}\right)\right]^2}. \label{eq6}
\end{align}
Actually, the results in \cite{AP2} were proved for the o.d.e \eqref{eq5} in the case $m \in \N$, but we readily see that the same results apply for non integer $m$ as long as $m > 2$.

By \eqref{eq6}, recalling that $m=\frac{2(N+\alpha)}{2+\alpha}>2$, we get
$$ \lim_{\varepsilon \to 0}  \varepsilon \, u_{\varepsilon,\alpha}^2(0)= \frac{2(2+\alpha)}{N-2}\left[(N+\alpha)(N-2)\right]^{\frac{N-2}{2+\alpha}}\frac{\Gamma\left(\frac{2(N+\alpha)}{2+\alpha}\right)}{\left[ \Gamma\left( \frac{N+\alpha}{2+\alpha}\right)\right]^2}.$$

Finally, by a careful analysis of \cite{AP2}, one can check that the convergence \eqref{eq7} is indeed uniform for $\alpha$ in compact subsets of $[0,\infty)$.
\end{proof}

An important estimate for $u_{\ve,\al}$ is proved in the following result, that allows to study the rescaled problem and pass to the limit $\ve\to0$.

\begin{lemma}\label{lem2}
The following estimate holds
$$u_{\varepsilon,\alpha}(x) \leqslant \left( \frac{\mu_{\varepsilon,\alpha}^{(p_\alpha-1-2\varepsilon)/4}}{\mu_{\varepsilon,\alpha}^{(p_\alpha-1-\varepsilon)/2} + C_{N,\alpha}^{-1}\abs{x}^{2+\alpha}} \right)^{\frac{N-2}{2+\alpha}} \qquad \text{for all} \ \ x \in B_1.$$
\end{lemma}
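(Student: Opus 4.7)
The plan is to identify the right-hand side of the claimed inequality as a ``bubble-like'' comparator, and then to reduce the pointwise bound to a classical Atkinson--Peletier-type estimate via the change of variable already used in the proof of Lemma~\ref{lem1}.

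Set $u_0:=u_{\varepsilon,\alpha}(0)=\mu_{\varepsilon,\alpha}^{-1/2}$ and $L^{2+\alpha}:=C_{N,\alpha}^{-1}u_0^{p_\alpha-1-\varepsilon}$. First I would rewrite the right-hand side: pulling $\mu_{\varepsilon,\alpha}^{(p_\alpha-1-\varepsilon)/2}$ out of the denominator and using the identity $(p_\alpha-1)(N-2)/(4(2+\alpha))=1/2$, an elementary algebraic rearrangement identifies it with
$$W(x):=u_0\bigl(1+L^{2+\alpha}|x|^{2+\alpha}\bigr)^{-(N-2)/(2+\alpha)}.$$
A direct computation, patterned on the verification that the family $U_{\lambda,\alpha}$ solves the critical H\'enon equation in $\R^N$, then shows that $-\Delta W=u_0^{-\varepsilon}|x|^{\alpha}W^{p_\alpha}$ in $\R^N$. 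Moreover, since $u_{\varepsilon,\alpha}\leqslant u_0$ one has $u_{\varepsilon,\alpha}^{p_\alpha-\varepsilon}\geqslant u_0^{-\varepsilon}u_{\varepsilon,\alpha}^{p_\alpha}$, so $u_{\varepsilon,\alpha}$ is a radial supersolution on $B_1$ of the very same PDE and shares with $W$ the values $u_0$ and $0$ for the function and its derivative at the origin.

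To convert this into the inequality $u_{\varepsilon,\alpha}\leqslant W$, I would apply the substitution $\tilde v(r):=\bigl(\tfrac{2}{2+\alpha}\bigr)^{2/(p_\alpha-1-\varepsilon)}u_{\varepsilon,\alpha}(r^{2/(2+\alpha)})$ used in Lemma~\ref{lem1}, which converts the radial H\'enon ODE to the classical slightly subcritical form
$$\tilde v''+\tfrac{m-1}{r}\tilde v'+\tilde v^q=0\ \text{ in }(0,1),\quad \tilde v'(0)=0=\tilde v(1),$$
with $m:=2(N+\alpha)/(2+\alpha)>2$ and $q:=(m+2)/(m-2)-\varepsilon$. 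The same substitution takes $W$ to the explicit Aubin--Talenti-type bubble
$$\tilde W(r)=\tilde v(0)\Bigl(1+\tfrac{\tilde v(0)^{q-1}}{m(m-2)}\,r^2\Bigr)^{-(m-2)/2},$$
the factor $1/(m(m-2))$ emerging from the identity $(N-2)(N+\alpha)=m(m-2)(2+\alpha)^2/4$. Thus the estimate becomes $\tilde v(r)\leqslant\tilde W(r)$ on $[0,1]$, which is precisely the pointwise Atkinson--Peletier estimate for positive radial solutions of the slightly subcritical equation in the unit ball of $\R^m$; as noted in the proof of Lemma~\ref{lem1}, the argument of \cite{AP2} carries over verbatim to non-integer $m>2$. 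Inverting the substitution via $|x|=r^{2/(2+\alpha)}$ then recovers the claimed estimate.

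The main obstacle is the classical pointwise estimate invoked above: a naive maximum-principle comparison between $\tilde v$ and $\tilde W$ fails because the two satisfy different boundary conditions at $r=1$, and $\tilde v$ is only a supersolution (rather than a solution) of the critical limit equation solved by $\tilde W$. The finer ODE monotonicity/shooting argument of \cite{AP2} is therefore required; it exploits the matching Cauchy data together with the positivity of the first nonvanishing coefficient in the Taylor expansion of $\tilde W-\tilde v$ at the origin (a direct Taylor computation yields $\tilde W-\tilde v=\tfrac{\varepsilon\,\tilde v(0)^{2q-1}}{8m(m+2)}\,r^4+O(r^6)$ near $0$, whose coefficient is strictly positive for $\varepsilon>0$). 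The remaining bookkeeping of exponents under the coordinate change is routine.
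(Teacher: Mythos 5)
Your proposal is correct and follows essentially the same route as the paper: both pass to the variable $\tilde v(r)=\bigl(\tfrac{2}{2+\alpha}\bigr)^{2/(p_\alpha-1-\varepsilon)}u_{\varepsilon,\alpha}(r^{2/(2+\alpha)})$ introduced in Lemma~\ref{lem1} and then invoke the Atkinson--Peletier pointwise bound for the slightly subcritical equation in dimension $m$, translating back via $|x|=r^{2/(2+\alpha)}$. The only cosmetic difference is that the paper performs one more Emden--Fowler substitution $t=(m-2)^{m-2}r^{2-m}$ so as to quote \cite[Lemma~1(iii)]{AP1} (equivalently \cite[Lemma~1]{AP2}) verbatim, whereas you state the resulting bound directly as $\tilde v\leqslant\tilde W$; your supersolution remark and Taylor-expansion computation are helpful intuition but, as you yourself note, the operative step is still the cited Atkinson--Peletier ODE estimate.
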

\begin{proof}
Proceeding as in \cite{AP1} (see also \cite{AP2}), by using the transformation
$$t=(m-2)^{m-2}r^{2-m},\quad y(t)=v(r)$$
on \eqref{eq5} and Lemma 1 (iii) in \cite{AP1} (see also \cite[Lemma 1]{AP2}), we find that
\begin{equation}\label{dap}
y(t)<\gamma\left(1+\frac{1}{k-1}\frac{\gamma^{p-1}}{t^{k-2}}\right)^{-1/(k-2)},
\end{equation}
where $k=2\,\dfrac{m-1}{m-2}$, $p=\dfrac{m+2}{m-2}-\ve$ and
$$\gamma=\lim_{t\to+\infty}y(t)=v(0)=\left[ \frac{2}{2+\alpha} \right]^{\frac{2}{p_{\alpha}-1-\varepsilon}} u_{\varepsilon,\alpha}(0).$$
Replacing in \eqref{dap} and simplifying we get that for $r\in(0,1)$
\begin{equation*}
\begin{split}
u_{\ve,\alpha}(r^{\frac{2}{2+\alpha}})&<u_{\ve,\alpha}(0)\left(1+{m-2\over m}\,{[v(0)]^{\frac{4}{m-2}-\ve}\over (m-2)^2r^{-2}} \right)^{(2-m)/2}\\
&\leq u_{\ve,\alpha}(0)\left(1+{[u_{\ve,\alpha}(0)]^{p_\alpha-1-\ve}\over (N-2)(N+\alpha)}\,r^2 \right)^{-\frac{N-2}{2+\alpha}}
\end{split}
\end{equation*}
Hence, taking into account that $r=|x|$, the conclusion follows.
\end{proof}

We end this section by proving a result about the linearized problem of \eqref{eq1}  at $u_{\ve,\al}$, i.e.
\begin{equation} \label{eqv}
\begin{cases}
-\Delta v = (p_\alpha-\ve)\abs{x}^{\alpha} u_{\ve,\alpha}^{p_\alpha-1-\varepsilon}v, & \ \ \text{in} \ B_1 \\
\ \ v=0 & \ \ \text{on} \ \partial B_1 
\end{cases}
\end{equation}
which is important in the classical bifurcation theory.

\begin{lemma}\label{lem4}
The solution $u_{\varepsilon,\alpha}$ is \emph{nondegenerate} in the space of the radial functions, namely, if $v$ is a radial solution to  \eqref{eqv} then $v\equiv 0$.
\end{lemma}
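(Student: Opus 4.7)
The plan is a scaling/shooting argument on the radial ODE. First, I would note that by radial symmetry and elliptic regularity, any radial solution $v$ of \eqref{eqv} is a $C^2$ function on $[0,1]$ satisfying
\[
v''(r) + \tfrac{N-1}{r}\,v'(r) + (p_\alpha - \varepsilon)\, r^\alpha\, u_{\varepsilon,\alpha}^{p_\alpha - 1 - \varepsilon}(r)\, v(r) = 0, \qquad r\in (0,1),
\]
with $v'(0)=0$ and $v(1)=0$. The indicial exponents at the regular singular point $r=0$ are $0$ and $2-N$, so the subspace of solutions smooth at the origin is one-dimensional, and it suffices to exhibit one such solution and verify that it does not vanish at $r=1$.

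The natural candidate comes from differentiating the nonlinear IVP with respect to its initial height. For $a>0$ let $u(\cdot;a)$ denote the unique solution of $u'' + \tfrac{N-1}{r}u' + r^\alpha u^{p_\alpha - \varepsilon} = 0$ with $u(0)=a$, $u'(0)=0$, and set $a_0 := u_{\varepsilon,\alpha}(0)$, so that $u(\cdot;a_0) \equiv u_{\varepsilon,\alpha}$ on $[0,1]$. Then $w(r) := \partial_a u(r;a)|_{a=a_0}$ solves the linearized ODE with $w(0)=1$, $w'(0)=0$, and so any smooth radial $v$ must be a scalar multiple of $w$. The lemma thus reduces to showing $w(1)\neq 0$.

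To evaluate $w(1)$, I would use the scaling invariance $u(r;a) = a\, u(a^\sigma r;1)$ with $\sigma := (p_\alpha-1-\varepsilon)/(2+\alpha)>0$, already recorded in the proof of Proposition \ref{prop1}. Differentiating in $a$ at $r=1$, $a=a_0$ gives
\[
w(1) \;=\; u(a_0^\sigma;1) \;+\; \sigma\, a_0^\sigma\, u'(a_0^\sigma;1).
\]
The first term vanishes since $0 = u_{\varepsilon,\alpha}(1) = a_0\, u(a_0^\sigma;1)$. The second is nonzero because $u(\cdot;1)$ is strictly positive on $(0,a_0^\sigma)$ and vanishes at $a_0^\sigma$, forcing $u'(a_0^\sigma;1) < 0$ by uniqueness for the IVP. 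Hence $w(1)\neq 0$, and $v(1)=0$ yields $v\equiv 0$.

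The main technical subtlety will be the one-dimensionality of the ``smooth at $r=0$'' solution space, which is a textbook Frobenius statement for the radial linear ODE with a regular singular point and does not rely on any nonlinear structure; the remainder is direct calculation. It is worth noting that the argument works equally for any subcritical $p\in(1,p_\alpha)$, consistent with Pohozaev-type obstructions only appearing at the threshold $p=p_\alpha$.
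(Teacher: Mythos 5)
Your proposal is correct, and it takes a genuinely different route from the paper. The paper's proof is the integral-identity argument of Damascelli--Grossi--Pacella: one multiplies \eqref{eq1} by $v$ and \eqref{eqv} by $u_{\ve,\al}$ to obtain $\int_{B_1}|x|^\alpha u_{\ve,\al}^{p_\al-\ve}v=0$, then introduces $\zeta(x)=x\cdot\nabla u_{\ve,\al}(x)$, checks that $-\Delta\zeta=(p_\al-\ve)|x|^\al u_{\ve,\al}^{p_\al-1-\ve}\zeta+(2+\al)|x|^\al u_{\ve,\al}^{p_\al-\ve}$, and another multiplication--integration step plus Hopf's lemma yields $v'(1)=0$; combined with $v(1)=0$ this forces $v\equiv 0$ by ODE uniqueness. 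You instead identify the one-dimensional space of radial solutions of the linearized ODE that are bounded at the origin, exhibit its generator $w$ by differentiating the scaling family $a\,u(a^\sigma r;1)$ in $a$, and verify directly that $w(1)=\sigma a_0^\sigma u'(a_0^\sigma;1)\neq 0$. The two approaches are secretly close: your $w$ equals, up to the constant $a_0^{-1}$, the combination $u_{\ve,\al}+\sigma\,r u_{\ve,\al}'$, so $\zeta=r u_{\ve,\al}'$ plays a role in both; and both ultimately rely on $u_{\ve,\al}'(1)<0$. What your argument buys is a shorter and more transparent ODE computation that avoids the two integration-by-parts steps and works verbatim for any $p\in(1,p_\al)$, which you correctly observe. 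What the paper's version buys is a form that extends more readily to non-radial kernels and to settings where the regular solution of the linearized ODE cannot be written down (see the reference to Damascelli--Grossi--Pacella). One small caveat worth stating explicitly in your write-up: the justification that $v$ is proportional to $w$ should not lean only on the phrase ``smooth at the origin''; it is cleaner to say that $v\in C^{1,\gamma}_0(\overline B_1)$ and radial forces $v'(0)=0$, and then standard uniqueness for the initial value problem (the singular coefficient $\tfrac{N-1}{r}$ poses no obstacle to bounded solutions) gives $v=v(0)\,w$. With that polish, the argument is complete.
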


\begin{proof}
Let $v$ be a radial solution to \eqref{eqv}. Similarly to \cite[Theorem 4.2]{DGP}, multiplying \eqref{eq1} by $v$ and \eqref{eqv} by $u_{\ve,\alpha}$, and integrating on $B_1$ we find that
\begin{equation}\label{ig0}
\int_{B_1}|x|^\alpha u_{\ve,\alpha}^{p_\alpha-\ve}v=0.
\end{equation}

On the other hand, direct computations show that the function given by \linebreak $\zeta(x)=x\cdot\nabla u_{\ve,\alpha}(x)$, $x\in B_1$, satisfies
$$-\Delta\zeta=-x\cdot\nabla(\Delta u_{\ve,\alpha})-2\Delta u_{\ve,\alpha}=(p_\alpha-\ve)|x|^\alpha u_{\ve,\alpha}^{p_\alpha-1-\ve}\zeta+(2+\alpha)|x|^\alpha u_{\ve,\alpha}^{p_\alpha-\ve}.$$

Now, multiplying the previous equation by $v$, the equation \eqref{eqv} by $\zeta$, integrating over $B_1$ and using \eqref{ig0} we find that
$$\int_{\partial B_1} \zeta {\partial v\over\partial \nu}\,d\sigma=\int_{\partial B_1} {\partial u_{\ve,\alpha}\over\partial \nu}\,{\partial v\over\partial \nu}\,d\sigma=0.$$
Since $u_{\ve,\alpha}$ and $v$ are radial functions, namely, $u_{\ve,\alpha}(x)=u_{\ve,\alpha}(|x|)$ and $v(x)=v(|x|)$, we get that $\nabla u_{\ve,\alpha}(x)=u_{\ve,\alpha}'(|x|)\dfrac{x}{|x|}$, so that $\dfrac{\partial u_{\ve,\alpha}}{\partial \nu}(x)=u_{\ve,\alpha}'(1)$ and similarly, $\dfrac{\partial v}{\partial \nu}(x)=v'(1)$ for $x\in\partial B_1$. Thus, $u_{\ve,\alpha}'(1)v'(1)=0$, but $u_{\ve,\alpha}'(1)<0$ by the Hopf's Lemma. Therefore $v'(1)=0$ and by continuation $v\equiv0$ since $v=v(r)$ satifies
\begin{equation*}
\begin{cases}
v'' + \dfrac{N-1}{r}v' +(p_\alpha-\ve) r^{\alpha}u_{\ve,\alpha}^{p_\alpha-1-\ve} v = 0,&\text{in } (0,1) \\
\ \ v(1) = 0= v'(1).
\end{cases}
\end{equation*}
This completes the proof.
\end{proof}

\section{The rescaled problem}\label{resprob}

Here, we present the rescaled problem mentioned in the introduction on which we deduce a bifurcation result by studying the change of the Morse index of the radial solution for $\al$ close to an even integer.
 
Now, denoting $\rho_{\ve}= \varepsilon^{-\frac{1}{N-2}}$ define the function $w_{\ve,\alpha}$ by
\begin{equation} \label{trans}
w_{\varepsilon,\alpha}(x) = \kappa_{\varepsilon,\alpha} u_{\varepsilon,\alpha}\left( \rho_{\ve}^{-1} x \right),\qquad\text{for } \rho_{\ve}^{-1} x\in B_1
\end{equation}
where $\kappa_{\varepsilon,\alpha}$ is defined by the relation $\ds\kappa_{\varepsilon,\alpha}^{1-(p_\alpha - \varepsilon)} =  C_{N,\alpha}\ \varepsilon^{-\frac{2+\alpha}{N-2}}.$ Notice that $w_{\varepsilon,\alpha}(x)$ satifies
\begin{equation} \label{eq2}
\begin{cases}
-\Delta w = C_{N,\alpha}\abs{x}^{\alpha} w^{\frac{N+2+2\alpha}{N-2}-\varepsilon}, & \ \ \text{in} \ B_{\rho_{\varepsilon}}\\
w > 0 & \ \ \text{in} \ B_{\rho_{\varepsilon}}\\
w=0 & \ \ \text{on} \ \partial B_{\rho_{\varepsilon}}
\end{cases}
\end{equation}
where $\ds B_{\rho_{\varepsilon}} = \left\{ x \in \R^N \ \ \vert  \ \ \ \abs{x} < \rho_{\ve}\right\}.$ Consider $u_{\varepsilon,\alpha}$ extended by zero outside of $B_1$, so that, $w_{\ve,\alpha}$ is also defined (as zero in $\R^N\setminus B_{\rho_{\ve}}$) in the whole $\R^N$.

Recall that $U_{\la,\al}$, with $\la>0$ in \eqref{radsolrn}, are the unique radial solutions to \eqref{problemrn} and notice that $U_{\la,\al}(0)=\la^{N-2\over 2}$.

As a consequence of Lemma \ref{lem1} and Lemma \ref{lem2} we have the following fact.
\begin{lemma} \label{lem3}
Let $K \subset (0,\infty)$ be a compact interval. Then, $w_{\varepsilon,\alpha}\longrightarrow U_{\alpha}$ as $\varepsilon \to 0$ uniformly for $(\alpha,x) \in K \times \R^N $, where
$$U_{\alpha}(x) = \frac{\lambda^{\frac{N-2}{2}}}{\left( 1+\lambda^{2+\alpha}\abs{x}^{2+\alpha}\right)^{\frac{N-2}{2+\alpha}}}, \quad\text{with} \quad \lambda^{\frac{N-2}{2}}: =[C_{N,\alpha}]^{-\frac{N-2}{2(2+\alpha)}} [M(N,\alpha)]^{\frac{1}{2}}$$
and $M(N,\alpha)$ is just the RHS of \eqref{eq7} in Lemma \ref{lem1} .
\end{lemma}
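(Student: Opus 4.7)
The strategy is a three-step argument: first pin down the limit of $w_{\varepsilon,\alpha}$ at the origin, then derive from Lemma \ref{lem2} a uniform pointwise majorant for $w_{\varepsilon,\alpha}$ which already converges to $U_\alpha$, and finally promote pointwise/local control to uniform convergence on $\R^N$ via elliptic regularity and a standard tail argument.

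First I would show $w_{\varepsilon,\alpha}(0) = \kappa_{\varepsilon,\alpha} u_{\varepsilon,\alpha}(0) \to \lambda^{(N-2)/2}$ uniformly in $\alpha \in K$. Squaring and using the defining relation $\kappa_{\varepsilon,\alpha}^{p_\alpha-1-\varepsilon} = C_{N,\alpha}^{-1}\varepsilon^{(2+\alpha)/(N-2)}$ together with $p_\alpha-1 = 2(2+\alpha)/(N-2)$, one rewrites
$$w_{\varepsilon,\alpha}(0)^2 = \Bigl(C_{N,\alpha}^{-1}\bigl(\varepsilon\,u_{\varepsilon,\alpha}(0)^2\bigr)^{(2+\alpha)/(N-2)}\mu_{\varepsilon,\alpha}^{\varepsilon/2}\Bigr)^{2/(p_\alpha-1-\varepsilon)}.$$
By Lemma \ref{lem1}, both $\varepsilon u_{\varepsilon,\alpha}(0)^2 \to M(N,\alpha)$ and $\mu_{\varepsilon,\alpha}^\varepsilon \to 1$ uniformly for $\alpha \in K$, and the outer exponent tends to $(N-2)/(2+\alpha)$, giving the limit $C_{N,\alpha}^{-(N-2)/(2+\alpha)} M(N,\alpha) = \lambda^{N-2}$.

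Next I would rewrite Lemma \ref{lem2}. Setting $M_\varepsilon = u_{\varepsilon,\alpha}(0)$ and multiplying numerator and denominator inside the bracket by $M_\varepsilon^{p_\alpha-1-\varepsilon}$, an elementary simplification (using again $p_\alpha-1 = 2(2+\alpha)/(N-2)$) reduces that estimate to
$$u_{\varepsilon,\alpha}(y) \leqslant \frac{M_\varepsilon}{\bigl(1 + C_{N,\alpha}^{-1} M_\varepsilon^{p_\alpha-1-\varepsilon}|y|^{2+\alpha}\bigr)^{(N-2)/(2+\alpha)}}.$$
Substituting $y = \rho_\varepsilon^{-1}x$ and multiplying by $\kappa_{\varepsilon,\alpha}$ produces a pointwise majorant for $w_{\varepsilon,\alpha}(x)$ (trivially valid also on $\R^N \setminus B_{\rho_\varepsilon}$, where $w_{\varepsilon,\alpha}\equiv 0$) whose numerator is $w_{\varepsilon,\alpha}(0)$ and whose coefficient in front of $|x|^{2+\alpha}$ equals $C_{N,\alpha}^{-1}(\varepsilon M_\varepsilon^2)^{(2+\alpha)/(N-2)}\mu_{\varepsilon,\alpha}^{\varepsilon/2}$, which by Lemma \ref{lem1} converges uniformly in $\alpha\in K$ to $C_{N,\alpha}^{-1} M(N,\alpha)^{(2+\alpha)/(N-2)} = \lambda^{2+\alpha}$. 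The majorant therefore converges uniformly on $K \times \R^N$ to $U_\alpha$, and in particular $\{w_{\varepsilon,\alpha}\}$ is uniformly bounded and has uniform polynomial decay $\lesssim (1+|x|^{2+\alpha})^{-(N-2)/(2+\alpha)}$.

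For the last step, the uniform $L^\infty$ bound together with \eqref{eq2} yields (viewing $|x|^\alpha$ as a bounded coefficient on each compact set) uniform $C^2_{loc}(\R^N)$ estimates for $\{w_{\varepsilon,\alpha}\}$ by standard elliptic regularity. Any $C^2_{loc}$-subsequential limit is a nonnegative radial solution $W$ of \eqref{problemrn} with $W(0) = \lambda^{(N-2)/2}$, and the ODE classification of radial solutions of \eqref{problemrn} (the family \eqref{radsolrn}) forces $W = U_\alpha$; hence the full family converges to $U_\alpha$ in $C^2_{loc}(\R^N)$. To upgrade to uniform convergence on $K \times \R^N$ I would use a standard tail argument: given $\delta > 0$, the majorant and the decay of $U_\alpha$ make $|w_{\varepsilon,\alpha} - U_\alpha| < \delta$ on $|x| \geqslant R$ for $R$ large, uniformly in $(\varepsilon,\alpha)$, while the $C^2_{loc}$ convergence controls $|x| \leqslant R$; uniformity in $\alpha \in K$ is inherited from the uniform bounds in the first two steps. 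I expect the main technical obstacle to be carefully tracking the uniformity in $\alpha$ across the precise arithmetic identifying the limits as $\lambda^{(N-2)/2}$ and $\lambda^{2+\alpha}$, with the $C^2_{loc}$ part and the tail argument being soft.
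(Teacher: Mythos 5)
Your argument follows the paper's proof essentially step for step: the uniform majorant derived from Lemma \ref{lem2}, the limit of $w_{\varepsilon,\alpha}(0)$ via Lemma \ref{lem1}, local compactness by elliptic regularity (the paper phrases this as Arz\'ela--Ascoli applied to the equicontinuous, uniformly bounded family), identification of the limit, promotion to uniform convergence on all of $\R^N$ via the uniform decay, and uniformity in $\alpha\in K$ by a compactness/contradiction argument. The algebra in your first two steps checks out and is a clean way to present the bound \eqref{dw}.

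The one place you skip a step is in identifying the subsequential limit $W$. From the limiting procedure you only know $W\geqslant 0$, and the family \eqref{radsolrn} is a classification of \emph{positive} entire radial solutions of \eqref{problemrn}; invoking it requires first ruling out that $W$ vanishes somewhere. The paper does this explicitly by a Pohozaev argument: if $W(r_0)=0$ for some $r_0>0$, then $W$ restricted to $B_{r_0}$ would be a positive solution of the Dirichlet problem on that ball with the threshold exponent $p_\alpha$, which the Pohozaev identity excludes; therefore $W>0$ on $\R^N$ and the classification applies with $W(0)=\lambda^{(N-2)/2}$ pinning down $W=U_\alpha$. (Alternatively, one can argue by uniqueness of the radial initial value problem at $r=0$, as in Proposition \ref{prop1}: $W$ is $C^2$, radial, with $W(0)=\lambda^{(N-2)/2}$ and $W'(0)=0$, so it coincides with $U_\alpha$; positivity then follows a posteriori.) Either way, this positivity step should be stated rather than absorbed into the phrase ``ODE classification.''
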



\begin{proof}
We shall use some ideas as in \cite[Lemma 3.6]{BP}. First, we consider $w_{\ve,\alpha}$ defined in $\R^N$ extended by zero outside of $B_{\rho_{\ve}}$. From the definition of $w_{\ve,\alpha}$ and Lemmas \ref{lem1} and \ref{lem2} it follows that 
\begin{align}
w_{\varepsilon,\alpha}(x) &\leqslant \kappa_{\ve,\alpha}\left( \frac{\mu_{\varepsilon,\alpha}^{\frac{2+\alpha}{2(N-2)}-\frac{\ve}{2}}}{\mu_{\varepsilon,\alpha}^{{2+\alpha\over N-2}-{\ve\over2}} + C_{N,\alpha}^{-1}\varepsilon^{2+\alpha\over N-2}\abs{x}^{2+\alpha}} \right)^{\frac{N-2}{2+\alpha}} \notag \\
&\leqslant \varepsilon^{-\frac 1 2} \kappa_{\ve,\alpha}\left( \frac{(\varepsilon^{-1}\mu_{\varepsilon,\alpha})^{\frac{2+\alpha}{2(N-2)}}}{(\varepsilon^{-1}\mu_{\varepsilon,\alpha})^{\frac{2+\alpha}{N-2}} + C_{N,\alpha}^{-1} \mu_{\varepsilon,\alpha}^{\frac{\varepsilon}{2}} \abs{x}^{2+\alpha}} \right)^{\frac{N-2}{2+\alpha}} \notag \\
&\leqslant \frac{C_N}{\Big(1 +\abs{x}^{2+\alpha}\Big)^{\frac{N-2}{2+\alpha}}},\label{dw}
\end{align}
for some constant $C_N$ uniform in $\varepsilon \in (0,\varepsilon_0)$, $\alpha \in K$ and $x \in \R^N$, in view of 
$$\ve\mu_{\ve,\al}^{-1}=M(N,\al)+o(1), \quad \mu_{\ve,\al}^{\ve\over 2}=1+o(1)\quad \text{and}\quad \varepsilon^{-\frac 1 2} \kappa_{\ve,\alpha}=C_{N,\al}^{-{N-2\over 2(2+\al)}} +o(1)$$
as $\ve\to 0$. Hence, the family $\{w_{\ve,\alpha}\}_\ve$ is bounded uniformly for $\alpha \in K$ in $L^{\infty}(\R^N)$. The elliptic regularity theory implies that $\{w_{\ve,\alpha}\}_\ve$ is equicontinuous on every
compact subset of $\R^N$. Hence, by the Arz\'ela-Ascoli Theorem, for every sequence $\varepsilon_n \to 0$ there exists a
subsequence, still denoted by $\{w_{\ve_n,\alpha}\}$, which converges to some function $\bar w$
uniformly on compact subsets of $\R^N$. Since $w_{\ve,\al}$ and $U_\al$ have uniform decay \eqref{dw}, the convergence is uniform in all $\R^N$. Taking the limit in \eqref{eq2}, using
Lemma \ref{lem1} again, we conclude that $\bar w$ is a radial positive function, i.e. $\bar w(x)=w(|x|)$, and satisfies
\begin{equation*}
\begin{cases}
-\Delta w = C_{N,\alpha}\abs{x}^{\alpha} w^{\frac{N+2+2\alpha}{N-2}},\ \ w>0, & \  \text{in} \ \R^N\\
\ \ w(0)=[C_{N,\alpha}]^{-\frac{N-2}{2(2+\alpha)}} [M(N,\alpha)]^{\frac{1}{2}},
\end{cases}
\end{equation*}
in view of  
$$w_{\varepsilon,\alpha}(0) = \kappa_{\varepsilon,\alpha} u_{\varepsilon,\alpha}\left(0\right)=C_{N,\al}^{-{N-2\over 2(2+\al)-\ve(N-2)} } \ve^{{2+\al\over 2(2+\al)-\ve(N-2)}-{1\over 2} } (\ve \mu_{\ve,\al}^{-1})^{1/2}$$
and 
$$ \lim_{\varepsilon \to 0^+}w_{\varepsilon,\alpha}(0)=[C_{N,\alpha}]^{-\frac{N-2}{2(2+\alpha)}} [M(N,\alpha)]^{\frac{1}{2}}=\lambda^{\frac{N-2}{2}}.$$
 Indeed, if $\bar w(r_0)=0$ for some $r_0>0$, then $\bar w$ is a solution to the problem
\begin{equation*}
\begin{cases}
-\Delta w = C_{N,\alpha}\abs{x}^{\alpha} w^{\frac{N+2+2\alpha}{N-2}}, & \ \ \text{in} \ B_{r_0}\\
w > 0 & \ \ \text{in} \ B_{r_0}\\
w=0 & \ \ \text{on} \ \partial B_{r_0}
\end{cases}
\end{equation*}
where $B_{r_0} = \left\{ x \in \R^N \ \vert   \ \abs{x} < r_0\right\}$, but it is known that the latter problem has no solution by the Pohozaev identity. Therefore, $\bar w>0$ in $\R^N$. Thus, by using \eqref{radsolrn} we conclude that $\bar w=U_\alpha$.

Finally, the convergence is uniform for $\alpha \in K$, because otherwise there exist $\delta_0>0$ and sequences $\ve_n>0$, $\ve_n\to0$ and $\al_n\in K$ such that for all $n\in\N$ it holds
\begin{equation}\label{nucal}
\lf\|w_{\ve_n,\al_n}-U_{\al_n}\rg\|_{L^\infty(\R^N)}\ge\delta_0.
\end{equation}
Hence, there exist subsequences still denoted by $\ve_n>0$ and $\al_n\in K$ such that $\ve_n\to0$ and $\al_n\to\al^*$ for some $\al^*\in K$. Arguing exactly as before, by using \eqref{dw}, $\{w_{\ve_n,\alpha_n}\}_n$ is bounded uniformly in $L^{\infty}(\R^N)$, consequently up to a subsequence converges uniformly in all $\R^N$ to a function $w^*$, and using the differential equation satisfied by $w_{\ve_n,\al_n}$, it follows that $w^*=U_{\al^*}$, which is a contradiction with \eqref{nucal} in view of the uniform convergence $U_{\al_n}\to U_{\al^*}$ in $\R^N$, this completes the proof.
\end{proof}

\begin{remark}
Taking into account the relation between $u_{\ve,\al}$ and $w_{\ve,\al}$ and Lemma \ref{lem4}, where we proved that $u_{\ve,\al}$ is radially nondegenerate in $B_1$, it readily follows that $w_{\ve,\al}$ is radially nondegenerate in $B_{\rho_\ve}$.
\end{remark}

\section{The linearized problem}\label{linprob}

In this section we consider the linearized operator of the rescaled problem \eqref{eq2} at $w_{\ve,\al}$, i.e.
\begin{equation} \label{plw}
\begin{cases}
-\Delta v = C_{N,\alpha} (p_{\alpha} - \varepsilon) \abs{x}^{\alpha} w_{\ve,\alpha}^{p_{\alpha}-\varepsilon -1}v, & \ \ \text{in} \ B_{\rho_{\varepsilon}} \\
v=0, & \ \ \text{on} \ \partial B_{\rho_{\varepsilon}}.
\end{cases}
\end{equation}
Following the ideas presented in \cite{GGN} (see also \cite{AG}), we decompose \eqref{plw} in radial part and angular part using the spherical harmonic functions, namely, we look for solutions of the form
$$v(r,\theta)=\sum_{k=0}^{+\infty}v_k(r)Y_k(\theta),$$
where $r=\abs x \in (0,\rho_{\ve})$, $\ds\theta={x\over |x|}\in \mathbb{S}^{N-1}$ and $\ds v_k(r)=\int_{\mathbb{S}^{N-1}}v(r,\theta)Y_k(\theta)\,d\theta$. Here, $Y_k=Y_k(\theta)$ denotes $k$-th spherical harmonic function, i.e. it satisfies
\begin{equation} \label{autflapb}
-\lap_{\mathbb{S}^{N-1}}Y_k=\sigma_kY_k\quad\text{in}\ \mathbb{S}^{N-1}
\end{equation}
where $\lap_{\mathbb{S}^{N-1}}$ is the Laplace-Beltrami operator on $\mathbb{S}^{N-1}$ with the standard metric and $\sigma_k$ is the $k$-th eigenvalue of $-\lap_{\mathbb{S}^{N-1}}$. It is known that 
\begin{equation} \label{autvlapb}
\sigma_k=k(N+k-2),\ \ k=0,1,2,\dots
\end{equation}
whose multiplicity is 
$$\dfrac{(N+2k-2)(N+k-3)!}{(N-2)!k!}$$ 
and that Ker$(\lap_{\mathbb{S}^{N-1}}+\sigma_k)=\mathbb{Y}_k(\R^N)|_{\mathbb{S}^{N-1}}$, where $\mathbb{Y}_k(\R^N)$ is the space of all homogeneous harmonic polynomials of degree $k$ in $\R^N$. Thus, $v$ satisfies \eqref{plw} if and only if $v_k$ is a solution of
\begin{equation}\label{eqvk}
\begin{cases}
\ds -v_k'' - \dfrac{N-1}{r}v_k'+{\sigma_k\over r^2}v_k=(p_\alpha-\ve) C_{N,\alpha}r^\alpha w_{\ve,\alpha}^{p_\alpha-1-\ve}v_k\quad \text{in}\ (0,\rho_{\ve}) \\
v_0'(0) = v_0(\rho_{\ve})=0,\quad \text{and} \quad v_k(0) = v_k(\rho_{\ve})=0,\quad \text{if} \ k>0.
\end{cases}
\end{equation}
In this way, we have that all the eigenfunctions of \eqref{plw} are given by $v_k(r)Y_k(\theta)$, if $v_k$ is a solution of \eqref{eqvk}. Notice that the function $\tilde v(r)= v_0(\rho_{\ve} r)$ is a radial solution to \eqref{eqv}. By Lemma \ref{lem4} we obtain that $\tilde v\equiv0$ (in other words, $w_{\ve,\al}$ is radially nondegenerate) and we get that \eqref{eqvk} admits non trivial solutions only if $k>0$. Hence, let us introduce the following eigenvalue problem,
\begin{equation}\label{eqz}
\begin{cases}
\ds -z'' - \dfrac{N-1}{r}z'-(p_\alpha-\ve) C_{N,\alpha}r^\alpha w_{\ve,\alpha}^{p_\alpha-1-\ve}(r) z=\Lambda {z\over r^2}\quad \text{in}\ (0,\rho_{\ve}) \\
z(0) = z(\rho_{\ve})=0,
\end{cases}
\end{equation}
which admits an increasing sequence of eigenvalues $\Lambda_{j}^{\varepsilon}(\alpha)$, $j\in\N$, which are simple, see for instance \cite{Z}. Thus, we obtain that \eqref{eqvk} is equivalent to find $\al>0$ and integers $j,k\ge 1$ such that 
\begin{equation}\label{eqmuk}
-\sigma_k=\Lambda_{j}^{\varepsilon}(\al),
\end{equation}
where $\Lambda_{j}^{\varepsilon}(\alpha)$ is an eigenvalue of \eqref{eqz}. Notice that, by Lemma \ref{lem3}, as $\ve\to 0$
\begin{equation} \label{erem5}
w_{\ve,\al}^{p_\alpha - \varepsilon - 1}(r)\to {\la^{2+\al} \over (1+\la^{2+\al}r^{2+\al})^2}
\end{equation}
uniformly for $r\in \R_+$ and for $\al$ in compact subset of $(0,+\infty)$, where $\la$ is defined in Lemma \ref{lem3}. Hence, to study the convergence of the spectrum of \eqref{eqz} it is important to know the spectrum of the limit problem, therefore we are lead to consider the following problem 
\begin{equation}\label{eqzl}
\begin{cases}
\ds -z'' - \dfrac{N-1}{r}z'-p_\alpha C_{N,\alpha}{\la^{2+\al}r^\alpha \over (1+\la^{2+\al} r^{2+\al})^2}z=\Lambda {z\over r^2}\quad \text{in}\ (0,+\infty) \\
z \in \mathcal{E}.
\end{cases}
\end{equation}
where
$$\mathcal{E}=\Big\{\psi\in C^1[0,+\infty)\ \Big| \ \int_0^{+\infty} r^{N-1}|\psi'(r)|^2\,dr<+\infty\Big\}.$$
It is known that the problem \eqref{eqzl} has an incresing sequence of eigenvalues, which we denote by 
$$\Lambda_1(\al) < \Lambda_2(\al) < \dots < \Lambda_j(\al)< \dots \quad j \in\N$$ 
where
\begin{equation}\label{l12}
\Lambda_1(\al)=-\frac{(\al+2)(2N+\al-2)}{4} \quad \text{and} \quad \Lambda_2(\al)=0,
\end{equation}
see \cite[proof of Theorem 1.3]{GGN}. Let us mention that the eigenspace of \eqref{eqzl} associated to the first eigenvalue $\Lambda_1(\al)$ is spanned by the function
\begin{equation}\label{z1a}
z(r)={\la^{2+\al\over 2} r^{2+\al\over2}\over (1+\la^{2+\al}r^{2+\al})^{N+\al\over 2+\al}}.
\end{equation}
Similarly to \cite{GGN} we shall obtain that \eqref{eqmuk} has a solution $\al_k^\ve$ only for $j=1$. Hence, we will be interested in the first eigenvalue $\Lambda_{1}^{\varepsilon}(\al)$ and its asymptotic behavior as $\ve\to0$.

\medskip
\begin{lemma} \label{lem6}
Let $\Lambda_{1}^{\varepsilon}(\al)$ and $z_{1,\ve,\al}$ denote respectively the first eigenvalue and the positive eigenfunction of \eqref{eqz}, where $z_{1,\ve,\al}$ is normalized with $\norm {z_{1,\ve,\al}}_{L^\infty}=1$. 
Given a compact interval $K \subset (0,+\infty)$, there exists $\ve_0>0$ and a constant $C>0$ independent of $\ve$ and $\alpha$ such that for all $0<\ve<\ve_0$ and $\alpha \in K$ we have the uniform decay
\begin{equation}\label{estzzp}
\abs{z_{1,\ve,\al}'(r)} \leqslant {C\over r^{N-1}},\qquad \abs{z_{1,\ve,\al}(r)}\leqslant {C\over r^{N-2}}.
\end{equation}
Moreover,
\begin{equation}\label{convz}
z_{1,\ve,\al}(r)\to z_{1,\alpha}(r) \quad \text{as} \  \ve\to 0,
\end{equation} 
uniformly for $r \in [0,+\infty), \ \alpha \in K$, where $z_{1,\al}$ is the positive eigenfunction of \eqref{eqzl} with $\|z_{1,\al}\|_{L^\infty}=1$  and
\begin{equation*}  \label{convla}
\Lambda_{1}^{\varepsilon}(\al)\to \Lambda_{1}(\al) \quad \text{as} \  \ve\to 0,
\end{equation*}
uniformly for $\al \in K$.

\end{lemma}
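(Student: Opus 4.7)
The plan is to first establish uniform bounds on $\Lambda_1^\ve(\al)$ and the uniform pointwise decay \eqref{estzzp}, then extract a convergent subsequence by Arzel\`a--Ascoli, and finally identify the limit as $(\Lambda_1(\al), z_{1,\al})$ via uniqueness of the first eigenpair of the Sturm--Liouville problem \eqref{eqzl}. Throughout I would extend $z_{1,\ve,\al}$ by zero to $(0,+\infty)$.

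I would begin with the Rayleigh characterization
$$\Lambda_1^\ve(\al) = \inf_{\psi} \frac{\int_0^{\rho_\ve}\bigl(|\psi'|^2 - V_\ve \psi^2\bigr) r^{N-1}\,dr}{\int_0^{\rho_\ve} \psi^2 r^{N-3}\,dr}, \qquad V_\ve(r) := (p_\al-\ve)C_{N,\al}r^\al w_{\ve,\al}^{p_\al-1-\ve}(r).$$
The pointwise bound \eqref{dw} from Lemma \ref{lem3} yields $r^2 V_\ve(r) \leqslant C$ uniformly in $\ve$ and $\al \in K$, which implies the lower bound $\Lambda_1^\ve(\al) \geqslant -C$. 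An upper bound $\Lambda_1^\ve(\al) \leqslant \Lambda_1(\al) + o(1)$ follows by testing with a suitable cutoff of the explicit function $z_{1,\al}$ in \eqref{z1a}; the fast decay of $z_{1,\al}$ and $z_{1,\al}'$ makes the truncation error negligible as $\rho_\ve \to +\infty$.

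Next I would prove \eqref{estzzp} by rewriting the ODE in divergence form as $(r^{N-1} z')' = -r^{N-1}\bigl(\Lambda_1^\ve(\al)/r^2 + V_\ve(r)\bigr) z$. The bound on $\Lambda_1^\ve(\al)$ from the previous step, the decay $V_\ve(r) \leqslant C r^{-(4+\al)}$ for large $r$ coming from \eqref{dw}, and the normalization $\|z_{1,\ve,\al}\|_{L^\infty} \leqslant 1$ show the right-hand side is uniformly integrable on $(0,+\infty)$. Integrating from $r$ to $\rho_\ve$ and using $z_{1,\ve,\al}(\rho_\ve)=0$ gives $|r^{N-1}z_{1,\ve,\al}'(r)| \leqslant C$; a further integration from $r$ up to $\rho_\ve$ then yields $|z_{1,\ve,\al}(r)| \leqslant C r^{-(N-2)}$.

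Finally, interior ODE regularity on $(0, \rho_\ve)$ together with the uniform $L^\infty$ bound and the estimate \eqref{estzzp} produces equicontinuity on compact subsets of $(0,+\infty)$. A diagonal Arzel\`a--Ascoli argument extracts a subsequence with $z_{1,\ve_n,\al} \to z^*$ and $\Lambda_1^{\ve_n}(\al) \to \Lambda^*$, and \eqref{estzzp} upgrades convergence to uniform on $[0,+\infty)$ (the inner region near $r=0$ is controlled by the ansatz $z = r^{(2+\al)/2}\phi$, which regularizes the singular endpoint and gives uniform H\"older control near $0$). Passing to the limit in the ODE via \eqref{erem5}, $z^*$ solves \eqref{eqzl} with eigenvalue $\Lambda^*$, satisfies $z^* \geqslant 0$ and $\|z^*\|_{L^\infty}=1$, so uniqueness of the positive eigenfunction for \eqref{eqzl} forces $z^* = z_{1,\al}$ and $\Lambda^* = \Lambda_1(\al)$; uniqueness of the limit gives convergence of the whole family. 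Uniformity in $\al \in K$ follows from a standard contradiction argument as at the end of Lemma \ref{lem3}. The main obstacle is that the domain $B_{\rho_\ve}$ is expanding, so the decay estimate \eqref{estzzp} must be carried out \emph{before} taking the limit and must be truly uniform in $\ve$: it is precisely what prevents mass from escaping to infinity and ensures the limit inherits the normalization $\|z^*\|_{L^\infty}=1$, which in turn rules out that $\Lambda^*$ coincides with a higher eigenvalue of the limit problem.
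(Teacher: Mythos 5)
Your overall architecture (Rayleigh quotient bounds, decay estimate, Arzel\`a--Ascoli, identification of the limit via uniqueness) matches the paper's, and the end-game arguments (uniqueness of the positive eigenfunction, contradiction argument for uniformity in $\al$) are essentially the same. However, there is a genuine gap in your derivation of the decay estimate \eqref{estzzp}.

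You claim that, after writing $(r^{N-1}z')' = -r^{N-1}\bigl(\Lambda_1^\ve(\al)r^{-2} + V_\ve(r)\bigr)z$, the right-hand side is \emph{uniformly integrable on $(0,+\infty)$}. This is false: the term $\Lambda_1^\ve(\al)\,r^{N-3}z(r)$ behaves at worst like $r^{N-3}$ under the normalization $\|z\|_{L^\infty}\leqslant1$, and even under the conclusion $|z(r)|\leqslant Cr^{-(N-2)}$ it behaves like $r^{-1}$, which is still not integrable at infinity. Moreover, integrating from $r$ to $\rho_\ve$ produces the boundary term $\rho_\ve^{N-1}z'(\rho_\ve)$, which is not controlled by $z(\rho_\ve)=0$ alone and has no a priori size bound. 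So neither the integral nor the boundary term is directly bounded, and the conclusion $|r^{N-1}z'(r)|\leqslant C$ does not follow as stated.

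The mechanism that actually closes this step is the \emph{sign} of $\Lambda_1^\ve(\al)$, not absolute integrability. The paper first proves $\limsup_{\ve\to0}\Lambda_1^\ve(\al)<0$ (your cutoff test function gives this), and then observes that, combined with the decay of the potential, there exists $r_0$ independent of $\ve,\al$ with
\[
s^{N-1}\tilde w_{\ve,\al}(s) + \Lambda_1^\ve(\al)\,s^{N-3}<0 \quad \text{for } s>r_0.
\]
Since $z_{1,\ve,\al}\geqslant0$ and $z_{1,\ve,\al}'(\rho_\ve)<0$, integrating $(r^{N-1}z')'$ from $r$ to $\rho_\ve$ shows $z_{1,\ve,\al}'(r)<0$ for $r>r_0$; then integrating from $0$ to $r$ and \emph{discarding the negative contribution on $(r_0,r)$} bounds $-r^{N-1}z_{1,\ve,\al}'(r)$ by $\int_0^{r_0}s^{N-1}\tilde w_{\ve,\al}(s)z_{1,\ve,\al}(s)\,ds$, a quantity supported on the fixed compact interval $(0,r_0)$ and hence uniformly bounded. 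This sign argument, not integrability on the whole half-line, is what produces the uniform constant in \eqref{estzzp}; your proof as written omits it and the step would fail. The rest of your argument (extracting a convergent subsequence, identifying the limit, the comment about the inner region near $r=0$, and the contradiction argument for uniformity in $\al$) goes through once this is repaired, and is in the same spirit as the paper.
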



\begin{proof}
Arguing as in \cite[Lemmas 3.3 and 3.4]{GGN}, we first observe that
\begin{equation} \label{dl1}
\limsup_{\ve \to 0} \ \Lambda_{1}^{\varepsilon}(\al) < 0
\end{equation}
uniformly for $\al \in K$. Indeed, by standard computations, taking into account \eqref{erem5}, one can prove \eqref{dl1} using the variational charaterization of $\Lambda_1^\ve(\al)$ with the test function  
$$\psi(r)={\la^{2+\al\over 2} r^{2+\al\over2}\over (1+\la^{2+\al}r^{2+\al})^{N+\al\over 2+\al}}\phi_\ve(r),$$ 
where $\phi_\ve\in C_0^\infty[0,\rho_\ve)$ is a suitable cut-off function. 

Let $z_{1,\ve,\al}$ be the first positive eigenfunction of \eqref{eqz} related to $\Lambda_{1}^{\varepsilon}(\al)$ and normalized with respect to $L^\infty$-norm, namely, $z_{1,\ve,\al}$ satisfies
\begin{equation}\label{eqzn}
\begin{cases}
\ds -z'' - \dfrac{N-1}{r}z'-\tilde w_{\ve,\al} z=\Lambda_{1}^{\varepsilon}(\al) {z\over r^2}\quad \text{in}\ (0,\rho_\ve) \\
z(0) = z(\rho_\ve)=0,\quad \norm{z}_\infty=1,
\end{cases}
\end{equation}
where for simplicity we denote $\tilde w_{\ve,\al}(r)=(p_\alpha-\ve) C_{N,\alpha}r^\alpha w_{\ve,\alpha}^{p_\alpha-1-\ve}(r)$. By using \eqref{dw}, for $\ve$ small enough and $\al \in K$ we have that for some constant $C>0$ independent of $\ve$ and $\al$ it holds
\begin{equation} \label{d26}
\tilde w_{\ve,\al}(r)\leqslant {C r^{\al}\over (1+r^{2+\al})^{2-{N-2\over 2+\al}\ve}}.
\end{equation}
By \eqref{dl1} and the previous inequality, there exist $\varepsilon_0>0$ depending only on $K$ and some $r_0>0$ independent of $\ve$ and $\al\in K$ such that 
\begin{equation} \label{d27}
r^{N-1}\tilde w_{\ve,\al}(r)+\Lambda_{1}^{\varepsilon}(\al)r^{N-3}<0, \quad \text{for all } \ r > r_0, \ \alpha \in K \text{ and } 0< \varepsilon < \varepsilon_0.
\end{equation}
Multiplying \eqref{eqzn} by $r^{N-1}$ and integrating on $(r,\rho_\ve)$ we get that
\begin{equation}\label{i1}
r^{N-1}z_{1,\ve,\al}'(r)=\rho_\ve^{N-1}z_{1,\ve,\al}'(\rho_\ve)+\int_r^{\rho_\ve} \lf[s^{N-1}\tilde w_{\ve,\al}(s)+\Lambda_{1}^{\varepsilon}(\al)s^{N-3}\rg]z_{1,\ve,\al}(s)\,ds,
\end{equation}
and hence, we find that
\begin{equation}\label{ddzn}
z'_{1,\ve,\al}(r)<0\qquad\text{ for }r > r_0
\end{equation}
in view of \eqref{d27}, $0\le z_{1,\ve,\al}(r)\le 1$ and $z_{1,\ve,\al}'(\rho_\ve)<0$. Now, multiplying \eqref{eqzn} by $r^{N-1}$, integrating on $(0,r)$ and using \eqref{d27} and \eqref{dl1} we get that for $r > r_0$

\begin{align}
-r^{N-1}z_{1,\ve,\al}'(r) &= \int_0^{r} \lf[s^{N-1}\tilde w_{\ve,\al}(s)+\Lambda_{1}^{\varepsilon}(\al)s^{N-3}\rg]z_{1,\ve,\al}(s)\, ds  \notag \\
& \leqslant \int_0^{r_0} \lf[s^{N-1}\tilde w_{\ve,\al}(s)+\Lambda_{1}^{\varepsilon}(\al)s^{N-3}\rg]z_{1,\ve,\al}(s)\, ds \notag \\
& \leqslant \int_0^{r_0} s^{N-1}\tilde w_{\ve,\al}(s)z_{1,\ve,\al}(s)\, ds \label{dss}
\end{align}

From \eqref{ddzn}, the sign of $z_{1,\ve,\al}$, \eqref{d26} and \eqref{dss} we find that
$$\abs{r^{N-1}z_{1,\ve,\al}'(r) }\le C\int_0^{r_0} {s^{N-1+\al}\over (1+ s^{2+\al})^{2-{N-2\over 2+\al}\ve_0}}ds$$
and this shows \eqref{estzzp}.

Now, let us study \eqref{convz}. From \eqref{eqzn} and \eqref{estzzp} we find that
$$\int_0^{+\infty} s^{N-1}\abs{z_{1,\ve,\al}'(s)}^2\,ds\le C,$$
where $z_{1,\ve,\al}$ is assumed to be zero for $s>\rho_\ve$. Thus, for every sequence $\ve_n>0$ there is a subsequence (still denoted by $\ve_n$) such that $z_{1,\ve_n,\al}\to z^*$ weakly in $\mathcal{E}$ for some function $z^*\in\mathcal{E}$, hence a.e. in $(0,+\infty)$ and uniformly on compact subsets of $[0,+\infty)$. Using \eqref{estzzp} again, we can pass to the limit into \eqref{eqzn} getting that $z^*\neq 0$ since, from \eqref{estzzp} the maximum point of $z_{1,\ve_n,\al}$ converges to a point $\bar r\in [0,+\infty)$ and $|z^*(\bar r)|=1$ from the uniform convergence. Hence, $z^*$ is a solution of \eqref{eqzl} corresponding to the eigenvalue $\Lambda_1(\al)$ with $\|z^*\|_{L^\infty}=1$. Therefore, $z^*=z_{1,\al}$.  Moreover, from the uniform decay \eqref{estzzp} the convergence $z_{1,\ve,\al}\to z_{1,\al}$ is uniform on the whole $[0,+\infty)$. The convergence $z_{1,\ve,\al}\to z_{1,\al}$ is uniform for $\al\in K$, because otherwise there are sequences $\ve_n>0$, $\ve_n\to 0$ and $\al_n\in K$ such that $z_{1,\ve_n,\al_n}$ are uniformly far from $z_{1,\al_n}$. Arguing as in Lemma \ref{lem3} we get a contradiction since from \eqref{estzzp} there is a convergent subsequence and we can pass to the limit into \eqref{eqzn}.

Finally, we prove the uniform convergence of $\Lambda_{1}^{\varepsilon}(\al)$ to $\Lambda_1(\al)$ for $\al\in K$. Multiply \eqref{eqzn} by $r^{N-1}z_{1,\al}(r)$ and integrate on $(0,\rho_\ve)$, and also multiply \eqref{eqzl} by $r^{N-1}z_{1,\ve,\al}(r)$ and integrate on $(0,\rho_\ve)$. Then, we subtract getting
\begin{equation*}
\begin{split}
-\rho_\ve^{N-1} z_{1,\ve,\al}'(\rho_\ve)z_{1,\al}(\rho_\ve)=&\,\int_0^{\rho_\ve}\lf[s^{N-1}\tilde w_{\ve,\al}(s)-p_\al C_{N,\al}{\la^{2+\al} s^{N-1+\al}\over (1+\la^{2+\al} s^{2+\al})^2}\rg]z_{1,\ve,\al}(s)z_{1,\al}(s)\,ds\\
&\,+ [\Lambda_{1}^{\varepsilon}(\al)-\Lambda_1(\al)] \int_0^{\rho_\ve}s^{N-3}z_{1,\ve,\al}(s)z_{1,\al}(s)\,ds.
\end{split}
\end{equation*}
On one hand, from \eqref{estzzp} and the definition of $z_{1,\al}$ we find that
$$\rho_\ve^{N-1} z_{1,\ve,\al}'(\rho_\ve)z_{1,\al}(\rho_\ve)=o_\ve(1)$$
for $\ve$ small enough, uniformly in $\al\in K$, while
$$\int_0^{\rho_\ve}\lf[s^{N-1}\tilde w_{\ve,\al}(s)-p_\al C_{N,\al}{\la^{2+\al}s^{N-1+\al}\over (1+\la^{2+\al}s^{2+\al})^2}\rg]z_{1,\ve,\al}(s)z_{1,\al}(s)\,ds=o_\ve(1)$$
as $\ve\to0$, uniformly for $\al\in K$, in view of
$$\tilde w_{\ve,\al}(s)\to p_\al C_{N,\al}{\la^{2+\al} s^{\al}\over (1+\la^{2+\al}s^{2+\al})^2}$$
as $\ve\to0$, uniformly for $\al\in K$, \eqref{estzzp} and dominated convergence. On the other hand, we have that
$$\int_0^{\rho_\ve}s^{N-3}z_{1,\ve,\al}(s)z_{1,\al}(s)\,ds\to \int_0^{+\infty}s^{N-3}z_{1,\al}^2(s)\,ds > 0$$
as $\ve\to 0$, uniformly for $\al\in K$. Therefore
$$\sup_{\al\in K}|\Lambda_{1}^{\varepsilon}(\al)-\Lambda_1(\al)|\to 0 \quad \text{ as } \ve \to 0$$
and this concludes the proof.
\end{proof}

\medskip
\begin{remark} \label{rem1}
Using the implicit function Theorem we can prove that $\alpha \mapsto \Lambda_{1}^{\varepsilon}(\al)$ is $C^1$. Actually one can prove that $\Lambda_{1}^{\varepsilon}(\al)$ is analytic, c.f. \cite[Proof of Lemma 2.2, part (c)]{DW} see also \cite{AG}.
\end{remark}

\begin{remark}\label{alke}
Notice that $\Lambda_1(\al)=\ds -{\al^2\over 4}-{\al N\over 2} + 1 - N$, so that, $\Lambda_1$ is strictly decreasing for $\al\geqslant 0$. Furthermore, the equation $\Lambda_1(\al)=-\sigma_k$ with $\al\geqslant0$ is satisfied only for $\al_k=2(k-1)$. Hence, by the uniform convergence of $\Lambda_1^\ve \to \Lambda_1$ as $\ve\to 0$ in the compact set $K_\delta=[\al_k-\delta,\al_k+\delta]$ for any $\delta >0$, it follows that there is $\ve_0>0$ (possibly depending on $\delta$) such that for all $0<\ve \leqslant \ve_0$ we have that $\Lambda_1^\ve(\al_k-\delta)> -\sigma_k>\Lambda_1^\ve(\al_k+\delta)$ (with fixed $\delta$). By continuity of $\Lambda_1^\ve$, there exists at least one $\al_k^\ve\in K_\delta$ such that 
\begin{equation}\label{eq74}
\Lambda_1^\ve(\al_k^\ve)=-\sigma_k,\quad k\geqslant 1.
\end{equation}

\end{remark}

\bigskip
Now, we show that equation \eqref{eqmuk} does not have any solution if $j\geqslant 2$. This fact clearly follows from the following result choosing any $\eta< N-1$. 

\begin{lemma} \label{lem7}
Let $\Lambda_2^\ve(\al)$ denote the second eigenvalue of \eqref{eqz}. Then, given $\eta>0$ and a compact interval $K\subset [0,+\infty)$, there exists $\ve_0>0$ such that
\begin{equation}\label{l2e}
\Lambda_2^\ve(\al)\geqslant -\eta,
\end{equation}
for all $0<\ve<\ve_0$ and $\al\in K$.
\end{lemma}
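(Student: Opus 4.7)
\emph{Plan.} We argue by contradiction. Suppose there exist sequences $\ve_n\to 0^+$ and $\al_n\in K$, with $\al_n\to\al^*\in K$ along a subsequence, such that $\Lambda_n := \Lambda_2^{\ve_n}(\al_n)\leqslant -\eta$. Let $z_n$ be the associated second eigenfunction of \eqref{eqz}, normalized by $\int_0^{\rho_{\ve_n}}r^{N-3}z_n^2\,dr=1$ and orthogonal to the first eigenfunction $z_{1,\ve_n,\al_n}$. Testing \eqref{eqz} against $r^{N-1}z_n$ and integrating by parts yields
\[
\int_0^{\rho_{\ve_n}} r^{N-1}|z_n'|^2\,dr = \int_0^{\rho_{\ve_n}} r^{N-1}\tilde w_{\ve_n,\al_n}z_n^2\,dr +\Lambda_n \leqslant \sup_{r>0}r^2\tilde w_{\ve_n,\al_n}(r),
\]
and the last supremum is uniformly bounded by Lemma \ref{lem3} combined with \eqref{d26}, so $\{z_n\}$ is bounded in the weighted Sobolev space $\mathcal{E}$. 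The goal is to extract a nontrivial weak limit that solves \eqref{eqzl} with eigenvalue $\leqslant -\eta<0$ and is orthogonal to $z_{1,\al^*}$; this will contradict the fact that $\Lambda_1(\al^*)$ is the only negative eigenvalue of \eqref{eqzl} (see \eqref{l12}).

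\emph{Mass retention.} The key analytic step is a Hardy-type estimate on the exterior interval $(R,\rho_{\ve_n})$. The substitution $z(r)=r^{-(N-2)/2}f(r)$ (using $f(\rho_{\ve_n})=0$) gives, after integration by parts and dropping a nonnegative boundary term,
\[
\int_R^{\rho_{\ve_n}}\!\bigl[r^{N-1}|z'|^2 - r^{N-1}\tilde w_{\ve_n,\al_n}z^2\bigr]\,dr \geqslant \Bigl[\tfrac{(N-2)^2}{4}-\delta_R\Bigr]\int_R^{\rho_{\ve_n}}r^{N-3}z^2\,dr,
\]
with $\delta_R := \sup_{r\geqslant R}r^2\tilde w_{\ve_n,\al_n}(r)\to 0$ as $R\to\infty$, uniformly in $n$. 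On $(0,R)$, the crude lower bound $\int_0^R[r^{N-1}|z_n'|^2 - r^{N-1}\tilde w_{\ve_n,\al_n}z_n^2]\,dr \geqslant -C_0\int_0^R r^{N-3} z_n^2\,dr$ holds with $C_0 := \sup_{r>0}r^2\tilde w_{\ve_n,\al_n}(r)$ uniformly bounded. Summing and using $\Lambda_n\leqslant -\eta$ gives, with $M_R := \int_R^{\rho_{\ve_n}}r^{N-3}z_n^2\,dr$,
\[
M_R \leqslant \frac{C_0-\eta}{C_0+(N-2)^2/4-\delta_R},
\]
which is strictly less than $1$ once $\delta_R<(N-2)^2/4$. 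Consequently $\int_0^R r^{N-3}z_n^2\,dr\geqslant c_0>0$ uniformly in $n$ for $R$ large.

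\emph{Conclusion.} Weak convergence $z_n\rightharpoonup z^*$ in $\mathcal{E}$ combined with Rellich-type local compactness in $L^2((0,R),r^{N-3}\,dr)$ yields $\int_0^R r^{N-3}(z^*)^2\,dr\geqslant c_0>0$, so $z^*\not\equiv 0$. Passing to the limit in \eqref{eqz} via Lemma \ref{lem3} shows $z^*\in\mathcal{E}$ solves \eqref{eqzl} with eigenvalue $\Lambda^*:=\lim\Lambda_n\leqslant -\eta$. By Lemma \ref{lem6} together with the uniform decay \eqref{estzzp}, dominated convergence gives $z_{1,\ve_n,\al_n}\to z_{1,\al^*}$ strongly in $L^2((0,\infty),r^{N-3}\,dr)$; combining with the weak convergence $z_n\rightharpoonup z^*$ and passing to the limit in the orthogonality $\int r^{N-3}z_nz_{1,\ve_n,\al_n}\,dr=0$ yields $\int r^{N-3}z^*z_{1,\al^*}\,dr=0$. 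Since $\Lambda_1(\al^*)$ is the unique negative eigenvalue of \eqref{eqzl}, necessarily $z^*\in\mathrm{span}(z_{1,\al^*})$, and the orthogonality then forces $z^*\equiv 0$, contradicting mass retention. The main obstacle lies in the Hardy-type exterior bound and the verification that $r^2\tilde w_{\ve,\al}(r)\to 0$ at infinity uniformly in $(\ve,\al)\in(0,\ve_0)\times K$, which rests on the precise pointwise estimate \eqref{d26} for $w_{\ve,\al}$.
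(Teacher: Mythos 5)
Your overall strategy matches the paper's (contradiction, extract a limit solving the limit eigenvalue problem \eqref{eqzl} with a negative eigenvalue, then contradict orthogonality to $z_{1,\al^*}$), but the technical implementation diverges in a way that leaves a genuine gap. The paper normalizes the second eigenfunction in $L^\infty$ and derives \emph{uniform pointwise decay} $|z_{2,n}(r)|\leqslant C/r^{N-2}$, $|z_{2,n}'(r)|\leqslant C/r^{N-1}$ directly from the ODE by exploiting that the second eigenfunction changes sign exactly once and that its nodal point $r_n$ stays bounded. That decay estimate, combined with $\|z_{2,n}\|_\infty=1$, is what guarantees the weak limit is nontrivial. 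You instead normalize in the weighted $L^2$ norm $\int_0^{\rho_{\ve_n}}r^{N-3}z_n^2\,dr=1$ and try to carry the estimate $\int_0^R r^{N-3}z_n^2\,dr\geqslant c_0>0$ to the limit via a ``Rellich-type local compactness in $L^2((0,R),r^{N-3}\,dr)$.''

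That compactness claim is where the argument breaks. The embedding of $\mathcal{E}$-bounded sequences (i.e.\ $\int_0^\infty r^{N-1}|z'|^2\,dr\leqslant C$, $z(0)=0$) into $L^2((0,R),r^{N-3}\,dr)$ is the radial Hardy embedding, and Hardy's inequality is sharp with \emph{no} compactness at the singular point $r=0$: the scaling $z_n(r)=n^{(N-2)/2}\phi(nr)$ with $\phi\in C_c^\infty(0,\infty)$ keeps both $\int r^{N-1}|z_n'|^2$ and $\int r^{N-3}z_n^2$ fixed while $z_n\rightharpoonup 0$ in $\mathcal{E}$. On $(\delta,R)$ for any $\delta>0$ you do get strong $L^2$ convergence by Rellich, but the normalization you chose does not rule out the weighted $L^2$ mass escaping to $r=0$ as $n\to\infty$, and your Hardy estimate on $(R,\rho_{\ve_n})$ says nothing about $(0,\delta)$. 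Thus $z^*\not\equiv 0$ does \emph{not} follow from the ``mass retention'' step as written. To close the gap you would either need a separate argument ruling out concentration at $r=0$ (e.g.\ a blow-up/concentration-compactness step: a bubble at $r=0$ would have to solve $-\phi''-\tfrac{N-1}{s}\phi'=\Lambda^*\phi/s^2$ in $\mathcal{E}$, which has no nontrivial solutions), or you should switch to the paper's route and derive a uniform pointwise decay for $z_{2,n}$ from the ODE and the boundedness of the nodal radius, after which the $L^\infty$-normalization gives the nontrivial limit directly. The Hardy bound on the exterior, and the verification that $\sup_{r\geqslant R}r^2\tilde w_{\ve,\al}(r)\to 0$ uniformly via \eqref{d26}, are both fine; it is the interior compactness that is not.
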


\begin{proof}
Suppose, by contradiction that \eqref{l2e} does not hold. Then, there exist sequences $\ve_n>0$, $\al_n\in K$ and numbers $\al^*\in K$, $\Lambda^*<0$ such that $\ve_n\to 0$, $\al_n\to \al^*$ and $\Lambda_n:=\Lambda_2^{\ve_n}(\al_n)\to \Lambda^*$ as $n\to+\infty$. Let $z_{2,n}$ be an eigenfunction of \eqref{eqz} associated to $\Lambda_n$ satisfying $\|z_{2,n}\|_{L^\infty}=1$. Consider also $z_{1,n}$ the positive eigenfunction of \eqref{eqz} normalized with $\|z_{1,n}\|_{L^\infty}=1$. We recall that $z_{1,n}$ is positive, $z_{2,n}$ changes sign once on the interval $(0,\rho_{\ve_n})$ and they are orthogonal in the sense of 
\begin{equation}\label{z1z2o}
\int_0^{\rho_{\ve_n}} s^{N-3} z_{1,n}(s) z_{2,n}(s)\,ds=0\ \ \text{for all }n.
\end{equation}
By Lemma \ref{lem6}, $z_{1,n}$ has the uniform decay \eqref{estzzp} and $z_{1,n}\to z_{1,\al^*}$, where $z_{1,\al^*}$ is the positive eigenfunction of \eqref{eqzl} with $\|z_{1,\al^*}\|_{L^\infty}=1$. We shall use that, in this case for $n$ sufficiently large, $z_{2,n}$ also has the same decay as $z_{1,n}$, namely,
\begin{equation}\label{dz2n}
\abs{z_{2,n}'(r)}\leqslant {C\over r^{N-1}}\ \ \ \text{and}\ \ \
\abs{z_{2,n}(r)}\leqslant {C\over r^{N-2}}
\end{equation}
for some constant $C>0$ independent of $n$. Let us assume \eqref{dz2n} for a moment. Hence, as in Lemma \ref{lem6}, we get that $z_{2,n}\to z^*$ weakly in $\mathcal{E}$, a. e. in $(0,+ \infty)$ and uniformly $[0,+\infty)$. Since $\|z_{2,n}\|_{L^\infty}=1$ and \eqref{dz2n}, the function $z^*\ne 0$. Moreover, $z^*$ satisfies the limit equation \eqref{eqzl} for $\al=\al^*$ and $\Lambda=\Lambda^*<0$. Since $\Lambda^*$ is a negative eigenvalue of equation \eqref{eqzl}, it must be the first eigenvalue and \eqref{l12} implies that $\Lambda^*=\Lambda_1(\al^*)$. Therefore, we have that the limit function $z^*$ is an eigenfunction of \eqref{eqzl} associated to its first eigenvalue, so $z^*$ is a nonzero multiple of $z_{1,\al^*}$. Now, taking into account the estimates \eqref{estzzp} and \eqref{dz2n} we can pass to the limit in \eqref{z1z2o} to conclude that
$$\int_0^{+\infty } s^{N-3} z_{1,\al^*}^2(s) \,ds=0,$$
which is a contradiction since $z_{1,\al^*}$ is a positive function. 

To finish the proof it remains to show \eqref{dz2n}. Indeed,  as in \eqref{d27}, by \eqref{d26} and the limit $\Lambda_n \to \Lambda^* < 0$, we have that for $n$ large enough there exists some $r_0$ independent of $n$ such that
\begin{equation}\label{d27n}
r^{N-1}\tilde w_n(r)+ \Lambda_n r^{N-3}<0,\ \ \text{for all }\ r>r_0.
\end{equation}
where $\tilde w_n = \tilde w_{\ve_n,\alpha_n}$ as defined in \eqref{eqzn}. As in \eqref{i1}, we get that
\begin{equation}\label{i2}
r^{N-1}z_{2,n}'(r)=\rho_{\ve_n}^{N-1}z_{2,n}'(\rho_{\ve_n})+\int_r^{\rho_{\ve_n} } \lf[s^{N-1}\tilde w_{n}(s)+\Lambda_n s^{N-3}\rg]z_{2,n}(s)\,ds,
\end{equation}
for all $r\in [0,\rho_{\ve_n})$ and for all $n\geqslant 1$. Let $r_n$ be the point where $z_{2,n}$ changes sign. From \eqref{i2} with $r=r_n$ we obtain
\begin{equation}\label{i2n}
r_n^{N-1}z_{2,n}'(r_n)=\rho_{\ve_n}^{N-1}z_{2,n}'(\rho_{\ve_n})+\int_{r_n}^{\rho_{\ve_n} } \lf[s^{N-1}\tilde w_{n}(s)+\Lambda_n s^{N-3}\rg]z_{2,n}(s)\,ds.
\end{equation}
Without loss of generality, we consider that 
\begin{equation}
\label{d2n}
z_{2,n}>0 \text{ in } (0,r_n) \text{ and } z_{2,n}<0 \text{ in } (r_n,\rho_{\ve_n}).
\end{equation}

We claim that the sequence $\{r_n\}$ remains bounded, since otherwise we would find that the right hand side of \eqref{i2n} is positive for $n$ sufficiently large because of \eqref{d27n}, \eqref{d2n}  and $z_{2,n}'(\rho_{\ve_n})>0$, while the left hand side is negative because of $z_{2,n}'(r_n)<0$, so we get a contradiction.

Now, we can assume that $r_n\leqslant r_0$ for all $n\geqslant 1$. By \eqref{d27n} and the sign of $z_{2,n}$ in $(r_0,\rho_{\ve_n})$ we get that $z_{2,n}'(r)>0$ for all $r\geqslant r_0$. As in \eqref{dss}, taking into account \eqref{d27n} and the signs of $z_{2,n}$ and $z_{2,n}'$, we obtain that for $r> r_0$
\begin{align}
r^{N-1}|z_{2,n}'(r)|  = r^{N-1}z_{2,n}'(r) &= - \int_0^{r} \lf[s^{N-1}\tilde w_{n}(s)+\Lambda_n s^{N-3}\rg]z_{2,n}(s)\, ds \notag \\
& \leqslant  - \int_0^{r_0} \lf[s^{N-1}\tilde w_{n}(s)+\Lambda_n s^{N-3}\rg]z_{2,n}(s)\, ds \notag \\
& \leqslant C. \notag
\end{align}
Therefore, we get the estimate \eqref{dz2n} and this finishes the proof.
\end{proof}

It is important to notice that, for small fixed $\varepsilon > 0$, the values of $\alpha$ such that $w_{\varepsilon,\alpha}$ is degenerate (i.e., $\Lambda_1^{\varepsilon}(\alpha) = -\sigma_k$) are isolated. We can deduce this property using that $\Lambda_1^{\varepsilon}$ is analytic with respect to $\alpha$, see Remark \ref{rem1}.


\section{Proof of the main result}\label{proofresult}

Now we are in position to prove our main theorem. We shall use some ideas as in \cite{GGPS} and \cite{GGN}. We shall prove the theorem for problem \eqref{eq1} by proving the bifurcation result for problem \eqref{eq2}. Actually, we consider the problem
\begin{equation} \label{eq8}
\begin{cases}
-\Delta w = C_{N,\alpha}\abs{x}^{\alpha} |w|^{p_{\alpha}-\varepsilon-1}w, & \ \ \text{in} \ B_{\rho_{\varepsilon}}\\
w=0 & \ \ \text{on} \ \partial B_{\rho_{\varepsilon}}
\end{cases}
\end{equation}
and show the existence of branches of nonradial solutions of \eqref{eq8} bifurcating from $(\alpha_k^{\varepsilon},w_{\varepsilon,\alpha_k^{\varepsilon}})$, then we prove that these solutions are positive and consequently solve \eqref{eq2}. 

First we need some notations. Consider the set $\mathcal{S}(\varepsilon)$ defined by
\begin{equation*}\label{S}
\mathcal{S}(\varepsilon):=
\left\{
\begin{split}
\,\,(\alpha,w_{\varepsilon,\alpha})\in  (0,+\infty)\times C^{1,\gamma}_0(\overline B_{\rho_{\varepsilon}})\, \hbox{ such that } \\ w_{\varepsilon,\alpha}
     \hbox{ is the unique radial}
\hbox{ solution of (\ref{eq2})} \,\,\,
\end{split}
\right\}
\end{equation*}
and recall that, given the curve $\mathcal{S}(\varepsilon)$,
a point $(\alpha^{\varepsilon},w_{\varepsilon,\alpha^{\varepsilon}})\in \mathcal{S}(\varepsilon)$
is a {\em  nonradial bifurcation point } if in  every neighborhood of
$(\alpha^{\varepsilon},w_{\varepsilon,\alpha^{\varepsilon}})$  in $(0,+\infty)\times C^{1,\gamma}_0(\overline B_{\rho_{\varepsilon}})$
there exists a point $(\alpha,v_{\varepsilon,\alpha})$ such that $v_{\varepsilon,\alpha}$ is a nonradial solution of \eqref{eq8}.

\begin{proof}[Proof of Theorem \ref{t1}] $\phantom{X}$ \\

\noindent {\bf Step 1.} Since the solution $w_{\varepsilon, \alpha}$ is always radially nondegenerate, a necessary condition for bifurcation is that the linearized equation \eqref{plw} admit a nontrivial solution. By the arguments in section \ref{linprob}, it is equivalent to study the eigenvalue problem \eqref{eqz} and the equation \eqref{eqmuk}. We shall prove the existence of a bifurcation point using the change in the Morse index of the solution $w_{\varepsilon, \alpha}$, by Lemma \ref{lem3} this solution approaches the solution $U_{\alpha}$ of the limit problem \eqref{problemrn} as $\varepsilon$ tends to $0$ and in \cite{GGN} it was proved that the Morse index of $U_{\alpha}$ changes as the parameter $\alpha$ crosses an even integer. 

Let $\alpha_k = 2(k-1)$ with $k \in \N$, $k \geqslant 2$ and consider the compact interval $[\alpha_k - \rho, \alpha_k + \rho]$, notice that without loss of generality we can assume that \linebreak $0 < \rho < 1$. By Lemma \ref{lem6}, Remark \ref{alke} and Lemma \ref{lem7} there exists $\varepsilon_0 > 0$ such that

\begin{equation} \label{eq73a}
\begin{split}
& \Lambda_1^{\varepsilon}(\alpha_k-\rho) > - \sigma_k > \Lambda_1^{\varepsilon}(\alpha_k+\rho) \\
& \Lambda_2^{\varepsilon}(\alpha) > - \sigma_1, \quad \forall \ 
\alpha \in [\alpha_k - \rho, \alpha_k + \rho]
\end{split}
\end{equation}
for all $0 < \varepsilon < \varepsilon_0$, where $\sigma_k$ was defined in \eqref{autflapb} and \eqref{autvlapb}. Hence, by continuity there exists at least one solution to the equation 

\begin{equation} \label{eq74a}
\Lambda_1^{\varepsilon}(\alpha) = - \sigma_k, \qquad \alpha \in [\alpha_k - \rho, \alpha_k + \rho].
\end{equation}

Moreover, we can take $\varepsilon_0$ smaller, if necessary, so that there are no other solutions of $ \Lambda_1^{\varepsilon}(\alpha) = - \sigma_l$
in $[\alpha_k - \rho, \alpha_k + \rho]$, with $l \neq k$, for all $ 0 < \varepsilon < \varepsilon_0$.

From now on, we fix $\varepsilon \in (0 , \varepsilon_0)$. 
Since the function $\Lambda_1^{\varepsilon}(\alpha)$ in analytic (see Remark \ref{rem1}), it follows that the solutions of \eqref{eq74a} are isolated and hence, by \eqref{eq73a}, we can conclude that there exists some $\alpha_k^{\varepsilon} \in (\alpha_k - \rho, \alpha_k + \rho)$ and $0 < \delta \leqslant \rho$ such that

\begin{equation} \label{bfpt}
\begin{split}
&{(i)}\ \ \  \alpha_k^{\varepsilon} \text{ is the unique solution of } 
\Lambda_1^{\varepsilon}(\alpha) = - \sigma_k \text{ in } [\alpha_k^{\varepsilon} - \delta, \alpha_k^{\varepsilon} + \delta]; \\
&{(ii)} \ \ \Lambda_1^{\varepsilon}(\alpha) \neq - \sigma_l,
 \text{ for all } l \neq k \text{ and } \alpha \in [\alpha_k^{\varepsilon} - \delta, \alpha_k^{\varepsilon} + \delta]; \\
&{(iii)} \   \Lambda_1^{\varepsilon}(\alpha) > - \sigma_k \text{ for all } \alpha_k^{\varepsilon} - \delta \leqslant \alpha  < \alpha_k^{\varepsilon};\\
&{(iv)} \ \ \Lambda_1^{\varepsilon}(\alpha) < - \sigma_k \text{ for all } \alpha_k^{\varepsilon}< \alpha  \leqslant \alpha_k^{\varepsilon} + \delta.
\end{split}
\end{equation}

In this case, for $\alpha = \alpha_k^{\varepsilon}$ there exists a nontrivial solution $v_k$ of \eqref{eqvk}, which implies that all nontrivial solutions of the linearized equation \eqref{plw} have the form
$$ v(x) = v_k(|x|)Y_k\left( \frac{x}{|x|} \right)$$
with $Y_k \in \text{Ker}(\lap_{\mathbb{S}^{N-1}}+\sigma_k)$. Arguing as in \cite[proof of Theorem 1.3]{GGN2} one can prove that, when $\alpha$ crosses $\alpha_k^{\varepsilon}$, the Morse index of $w_{\varepsilon, \alpha}$ increases by the dimension of the eigenspace $\text{Ker}(\lap_{\mathbb{S}^{N-1}}+\sigma_k)$.

\noindent {\bf Step 2.} Now we prove that the point $\alpha_k^{\varepsilon}$ defined as in the previous step gives rise to a bifurcation point. We restrict our
attention to the subspace $\mathcal{H}_{\varepsilon}$ of $C^{1,\gamma}_0(\overline B_{\rho_{\varepsilon}})$ given by
\begin{equation*}\label{2.3}
\mathcal{H}_{\varepsilon}:=\left\{
v\in C^{1,\gamma}_0(\overline B_{\rho_{\varepsilon}}) \, ,\,
\hbox{s.t. }v(x_1,\dots,x_N)=v(g(x_1,\dots, x_{N-1}),x_N)\,\atop \hbox{
  for any } g\in O(N-1)
\right\}
\end{equation*}
where $0 < \gamma < 1$ and $O(N-1)$ is the orthogonal group in  $\R^{N-1}$.

By a result of Smoller and Wasserman in
\cite{SW}, we have that for any $k$ the eigenspace  $V_k$ of the
Laplace-Beltrami operator on  $\mathbb{S}^{N-1}$, spanned by the
eigenfunctions corresponding to the
eigenvalue  $\sigma_k$ which are $O(N-1)$ invariant is
one-dimensional. Hence, the Morse index $m(\alpha)$ of the radial solution $w_{\varepsilon,\alpha}$ in
the space $\mathcal{H}_{\varepsilon}$ satisfies 
\begin{equation}\label{2.4}
m(\alpha_k^{\varepsilon}+\delta)-m(\alpha_k^{\varepsilon}-\delta)=1
\end{equation} 

Let us consider the operator $T_{\varepsilon}(\alpha,v):(0,+\infty)\times \mathcal{H}_{\varepsilon}\to \mathcal{H}_{\varepsilon}$,
defined by  $T_{\varepsilon}(\alpha,v):=\left(-\Delta\right)^{-1}\left( C_{N,\alpha}|x|^{\alpha}|v|^{p_{\alpha}-\varepsilon-1}v\right)$. Notice that  $T_{\varepsilon}$  is a compact operator
for every fixed $\alpha$ and is continuous with respect to $\al$. Let us suppose, by contradiction, that $(\al_k^{\varepsilon},w_{\varepsilon,\al_k^{\varepsilon}})$ is not a
bifurcation point and set
$F_{\varepsilon}(\al,v):=v-T_{\varepsilon}(\al,v)$. In this case, there exists $\delta>0$ small enough such that we have \eqref{bfpt}, \eqref{2.4} and  
\begin{equation}\label{2.5}
F_{\varepsilon}(\alpha,v)\neq 0,\quad \forall \,\, (\alpha,v) \in [\alpha_k^{\varepsilon}-\delta,\alpha_k^{\varepsilon}+\delta] \times \left( \overline{B}_{\delta}(w_{\varepsilon,\alpha}) \backslash \{ w_{\varepsilon,\alpha} \} \right),
\end{equation}
where $\overline{B}_{\delta}(w_{\varepsilon,\alpha})$ is the closed ball in $\mathcal{H}_{\varepsilon}$ centered in $w_{\varepsilon,\alpha}$ with radius $\delta$.

Let us consider the
set  $\Gamma:=\{(\alpha,v)\in[\alpha_k^{\varepsilon}-\delta,\alpha_k^{\varepsilon}+\delta]\times  \mathcal{H}_{\varepsilon}\,:\, \norm{v- w_{\varepsilon,\alpha}}_{\mathcal{H}_{\varepsilon}}
<\delta \}$.  Notice that
$F_{\varepsilon}(\alpha,\cdot)$ is a compact perturbation of the identity and so it makes
sense to consider the Leray-Schauder topological degree $\mathit{deg}
\left(  F_{\varepsilon}(\alpha,\cdot),\Gamma_{\alpha},0\right)$ of $F_{\varepsilon}(\alpha,\cdot)$ on the set
 $\Gamma_{\alpha}:=\{v\in \mathcal{H}_{\varepsilon}\hbox{ such that } (\alpha,v)\in \Gamma\}$. From
\eqref{2.5} it follows $F_{\varepsilon}$ is an admissible homotopy, therefore by the homotopy invariance of
the degree, we get
\begin{equation}\label{2.6}
\mathit{deg} \left( F_{\varepsilon}(\alpha,\cdot),\Gamma_{\alpha},0\right)\hbox{ is constant on }[\alpha_k^{\varepsilon}-\delta,\alpha_k^{\varepsilon}+\delta].
\end{equation}
Since the linearized  operator $(F_{\varepsilon})_{v}(\alpha,w_{\varepsilon,\alpha})$ is invertible for  $\alpha=\alpha_k^{\varepsilon}+\delta$ and
$\alpha=\alpha_k^{\varepsilon}-\delta$, we have
$$\mathit{deg} \left( F_{\varepsilon}(\alpha_k^{\varepsilon}-\delta,\cdot),\Gamma_{\alpha_k^{\varepsilon}-\delta},0\right)=(-1)^{m(\alpha_k^{\varepsilon}-\delta)}$$
and
$$\mathit{deg} \left( F_{\varepsilon}(\alpha_k^{\varepsilon}+\delta,\cdot),\Gamma_{\alpha_k^{\varepsilon}+\delta},0\right)=(-1)^{m(\alpha_k^{\varepsilon}+\delta)}.$$

By  \eqref{2.4} it follows that
$$\mathit{deg} \left( F_{\varepsilon}(\alpha_k^{\varepsilon}-\delta,\cdot),\Gamma_{\alpha_k^{\varepsilon}-\delta},0\right)=-\mathit{deg} \left(
F_{\varepsilon}(\alpha_k^{\varepsilon}+\delta,\cdot),\Gamma_{\alpha_k^{\varepsilon}+\delta},0\right)$$
contradicting \eqref{2.6}. Then $(\alpha_k^{\varepsilon},w_{\varepsilon,\alpha_k^{\varepsilon}})$ is a bifurcation point of \eqref{eq8} 
and the bifurcating solutions are nonradial since $w_{\varepsilon,\alpha} $ is radially
nondegenerate for any $\alpha$ as proved in Lemma \ref{lem4}.

\noindent {\bf Step 3.} Finally, we prove that the branch of nonradial solutions of \eqref{eq8} which bifurcate from $w_{\varepsilon,\alpha_k^{\varepsilon}}$ contains only positive solutions and therefore it is a branch of nonradial solutions of \eqref{eq2}.

Let $\Sigma_{\varepsilon}$ denote the set
\begin{equation*}
\Sigma_{\varepsilon} := \overline{\{ (\alpha,v)\in \R_{+} \times \mathcal{H}_{\varepsilon} \ | \ F_{\varepsilon}(\al,v)=0 \ \text{and} \ v \neq w_{\varepsilon,\alpha}\}}
\end{equation*}
i.e. the closure, in $\R_{+} \times \mathcal{H}_{\varepsilon}$, of the set of solutions of $F_{\varepsilon}(\al,v)=0$ different from $w_{\varepsilon,\alpha}$. Notice that, by the arguments in the previous step, the pair $(\alpha_k^{\varepsilon},w_{\varepsilon,\alpha_k^ {\varepsilon}})$ is a nonradial bifurcation point and then belongs to $\Sigma_{\varepsilon}$. For $(\alpha_k^{\varepsilon},w_{\varepsilon,\alpha_k^ {\varepsilon}}) \in \Sigma_{\varepsilon}$ let $\mathcal{C}(\alpha_k^{\varepsilon}) \subset \Sigma_{\varepsilon}$ denote the closed connected component of $\Sigma_{\varepsilon}$ which contains $(\alpha_k^{\varepsilon},w_{\varepsilon,\alpha_k^ {\varepsilon}})$ and it is maximal with respect to the inclusion.

Consider the set $\mathcal{C} \subset \mathcal{C}(\alpha_k^{\varepsilon})$ of points $(\alpha, w_{\alpha})$ for which $w_{\alpha}$ is a positive solution. Since  $(\alpha_k^{\varepsilon},w_{\varepsilon,\alpha_k^ {\varepsilon}}) \in \mathcal{C}$ then $\mathcal{C} \neq \emptyset$.
By standard arguments (see \cite[Theorem 3.3, Step 1]{G}) one can prove that the set $\mathcal{C}$ is open and closed in $\mathcal{C}(\alpha_k^{\varepsilon})$ and therefore it is a branch of nonradial solutions of \eqref{eq2}.

Since we proved the existence of nonradial solutions of \eqref{eq2}, by inverting the transformation \eqref{trans} we get the existence of nonradial solutions of \eqref{eq1}. This finishes the proof.
\end{proof}

\begin{center}
{\bf Acknowledgements}
\end{center}

\noindent The first author was partially supported by grant Fondecyt Iniciaci\'on 11130517, Chile.


\begin{thebibliography}{99}

\bibitem{AG} Amadori, A.; Gladiali, F., Bifurcation and symmetry breaking for the H\'enon equation. {\it Adv. Differential Equations}, Vol. {\bf 19}, no. 7/8 (2014), 755-782.

\bibitem{AP1} Atkinson, F. V.; Peletier, L. A. Emden-Fowler equations involving critical exponents. {\it Nonlinear Anal.} {\bf 10} (1986), no. 8, 755–776.

\bibitem{AP2} Atkinson, F. V.; Peletier, L. A. Elliptic equations with nearly critical growth. {\it J. Differential Equations} {\bf 70} (1987), no. 3, 349–365.

\bibitem{BS} Badiale, M.; Serra, E.
Multiplicity results for the supercritical Hénon equation. 
{\it Adv. Nonlinear Stud.} {\bf 4} (2004), no. 4, 453–467. 

\bibitem{BP} Brezis, H.; Peletier, L. A. Asymptotics for elliptic equations involving critical growth. {\it Partial differential equations and the calculus of variations}, Vol. I, , Progr. Nonlinear Differential Equations Appl., 1, Birkhäuser (1989) 149–192.

\bibitem{DGP} Damascelli, L.; Grossi, M.; Pacella, F.
Qualitative properties of positive solutions of semilinear elliptic equations in symmetric domains via the maximum principle. {\it Ann. Inst. H. Poincaré Anal. Non Linéaire} {\bf 16} (1999), no. 5, 631–652. 

\bibitem{DW} Del Pino, M. and Wei, J. Supercritical elliptic problems in domains with small holes. {\it Ann. Inst. H. Poincaré Anal. Non Linéaire} {\bf 24} (2007), 507-520.

\bibitem{GNN} Gidas, B., Ni, W., Nirenberg, L., Symmetries and related properties via the maximum principle, {\it Comm. Math. Phys.} {\bf 68} (1979) 209-243.

\bibitem{G}
Gladiali, F., A global bifurcation result for a semilinear elliptic equation, {\it J. Math. Anal. App.} {\bf 369} (2010) 306-311.

\bibitem{GG12}{\sc F. Gladiali, M. Grossi}, Supercritical elliptic problem with
nonautonomous nonlinearities, {\em J. Differential Equations} {\bf 253} (2012), no. 9, 2616-2645.

\bibitem{GGPS} Gladiali, F.; Grossi, M.; Pacella, F.; Srikanth, P. N. Bifurcation and symmetry breaking for a class of semilinear elliptic equations in an annulus, {\it Cal. Var. Partial Differential Equations} {\bf 40} (3-4) (2011) 295--317.

\bibitem{GGN}
Gladiali, F., Grossi M. and Neves S. L. N., Nonradial solutions for the Hénon equation in $\R^N$, {\it Adv. Math.} {\bf  249} (2013) 1-36.

\bibitem{GGN2}
Gladiali, F., Grossi M. and Neves S. L. N., Symmetry breaking and Morse index of solutions of nonlinear elliptic problems in the plane, {\it Commun. Contemp. Math.}, {\bf 18}, (2016),
1550087, 31.

\bibitem{HCZ} Hao, J.; Chen, X.; Zhang, Y.; Infinitely many spike solutions for the H\'enon equation with critical growth. {\it J. Differential Equations} {\bf 259} (2015), no. 9, 4924-4946.

\bibitem{H} Hénon, M., Numerical experiments on the stability of spherical stellar systems, {\it Astron. $\&$ Astrophys.} {\bf 24} (1973) 229-238.

\bibitem{LP} Li, S.; Peng, S., Asymptotic behavior on the Hénon equation with supercritical exponent. {\it Sci. China} Ser. A {\bf 52} (2009), no. 10, 2185–2194.

\bibitem{LiuP} Liu, Z. and Peng, S., Solutions with large number of peaks for the supercritical H\'enon equation. {Pacific J. Math.}, {\bf 280}, (2016), 115-139.

\bibitem{Na} Nagasaki, K. Radial solutions for $\Delta u+|x|^l|u|^{p-1}u=0$ on the unit ball in $R^n$. {\it J. Fac. Sci. Univ. Tokyo} Sect. IA Math. {\bf 36} (1989), no. 2, 211-232.

\bibitem{Ni} Ni, W.-M. A nonlinear Dirichlet problem on the unit ball and its applications. {\it Indiana Univ. Math. J.} {\bf 31} (1982), no. 6, 801–807.

\bibitem{NN} Ni, W.-M.; Nussbaum, R. D.
Uniqueness and nonuniqueness for positive radial solutions of $\Delta u+f(u,r)=0$. 
{\it Comm. Pure Appl. Math.} {\bf 38} (1985), no. 1, 67–108.

\bibitem{P} Peng, S, Multiple boundary concentrating solutions to Dirichlet problem of Hénon equation, {\it Acta Math. Appl. Sin. Engl. Ser.} {\bf 22} (2006) 137-162. 

\bibitem{PS} Pistoia, A. and Serra, E. Multi-Peak solutions for the Hénon equation with slightly subcritical gowth {Math. Z.} {\bf 256},(2007) 75-97 .

\bibitem{S} Serra, E. Non radial positive solutions for the Hénon equation with critical growth. {\it Calc. Var. Part. Diff. Eq.} {\bf 23} (2005), no. 3, 301–326.

\bibitem{SSW} Smets, D.; Willem, M.; Su, J. Non-radial ground states for the Hénon equation. {\it Commun. Contemp. Math.} {\bf 4} (2002), no. 3, 467–480. 

\bibitem{SW} Smoller, J.; Wasserman, A. Symmetry breaking for solutions of semilinear elliptic equations with general boundary conditions, {\it Comm. Math. Phys.} {\bf 105} (1986) 415--441.

\bibitem{WY} Wei, J.; Yan, S. Infinitely many nonradial solutions for the Hénon equation with critical growth.
{\it Rev. Mat. Iberoam.} {\bf 29} (2013), no. 3, 997–1020. 

\bibitem{Z} Zettl, A., Sturm-Liouville theory. {\it Math. Surveys Monogr.}, vol. 121, AMS, 2005.

\end{thebibliography}
\end{document}